  \numberwithin{equation}{section}
  \newcommand{\N}{\mathbb{N}}         
  \newcommand{\R}{\mathbb{R}}         
  \newcommand{\PP}{\mathbb{P}}        
  \newcommand{\diam}{\text{diam}}       
  \renewcommand{\AA}{\mathcal{A}}          
  \newtheorem{theorem}{Theorem}[section]
  \newtheorem{lemma}[theorem]{Lemma}
  \newtheorem{prop}[theorem]{Proposition}
  \theoremstyle{remark}
  \newtheorem{rem}[theorem]{Remark}
\subjclass[2010]{Primary 28A75; Secondary 42A61, 28A80}
\begin{document}

\thanks{P.S. acknowledges support from a Leverhulme Early Career Fellowship. V.S was partly supported by the Academy of Finland project \#126976.}

\title[Sets which are not tube null]{Sets which are not tube null and intersection properties of random measures}

\author{Pablo Shmerkin}
\address{Department of Mathematics and Statistics\\
            Torcuato Di Tella University \\
            Av. Figueroa Alcorta 7350 (1428)\\
            Buenos Aires, Argentina}
\email{pshmerkin@utdt.edu}

\author{Ville Suomala}
\address{Department of Mathematical Sciences \\
         P.O. Box 3000 \\
         FI-90014 University of Oulu \\
         Finland}
\email{ville.suomala@oulu.fi}

\begin{abstract}
We show that in $\mathbb{R}^d$ there are purely unrectifiable sets of Hausdorff (and even box counting) dimension $d-1$ which are not tube null, settling a question of Carbery, Soria and Vargas, and improving a number of results by the same authors and by Carbery. Our method extends also to ``convex tube null sets'', establishing a contrast with a theorem of Alberti, Cs\"{o}rnyei and Preiss on Lipschitz-null sets. The sets we construct are random, and the proofs depend on intersection properties of certain random fractal measures with curves.
\end{abstract}

\maketitle

\section{Introduction and main results} \label{sec:introduction}

\subsection{Non-tube null sets and localisation of the Fourier transform}

By a \textit{tube} $T$ of width $w=w(T)>0$ we mean the $w$-neighborhood of some line in $\mathbb{R}^d$. We recall that a set $A\subset\mathbb{R}^d$ is called \textit{tube null} if for any $\delta>0$ it can be covered by countably many tubes $\{T_j\}$ with $\sum_j w(T_j)^{d-1}\le \delta$.

The class of tube null sets arises, perhaps surprisingly, in the localisation problem for the Fourier transform in dimension $d\ge 2$. Indeed, Carbery, Soria and Vargas \cite[Theorem 4]{CSV07} have shown (generalizing a result of Carbery and Soria in \cite{CarberySoria97}) that if $E$ is a tube null subset of the unit ball of $\R^d$ (denoted $\mathbf{B}_d$), where $d\ge 2$, then there exists $f\in L^2(\mathbb{R}^d)$ which is identically zero on $\mathbf{B}_d$, and such that the localisations
\[
S_R f(x) = \int_{|\xi|<R} \widehat{f}(\xi) \exp(2\pi i \xi\cdot x) d\xi
\]
fail to converge as $R\to\infty$ for all $x\in E$. It is an open problem whether, conversely, every set of divergence for $S_R$ is tube-null. Motivated by this connection, in \cite[p.155]{CSV07} (see also \cite{Carbery09}) the authors pose the problem of finding the infimum of the Hausdorff dimensions of sets in $\mathbb{R}^d$, which are \textit{not} tube null, and show that this infimum lies between $d-1$ and $d-1/2$. We are able to settle this question:

\begin{theorem} \label{thm:non-tube-null}
There exists a purely unrectifiable set $A\subset \mathbb{R}^d$ which has Hausdorff and box counting dimension $d-1$ and is not tube null.
\end{theorem}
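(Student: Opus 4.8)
The plan is to construct $A$ as the support (or a suitable subset of the support) of a random measure $\mu$ obtained by a multiplicative cascade or percolation-type construction on a grid of mesh $2^{-n}$, where at each stage cubes are kept with a carefully tuned probability so that the resulting set has Hausdorff and box dimension exactly $d-1$. The key point is that "not tube null" is a statement about \emph{all} countable covers by thin tubes, so it suffices to produce a single nonzero Borel measure $\mu$ supported on $A$ such that $\mu(T) \le C\, w(T)^{d-1}$ for every tube $T$: then any cover $\{T_j\}$ with $\sum_j w(T_j)^{d-1} \le \delta$ would force $\mu(A) \le C\delta$, and letting $\delta \to 0$ contradicts $\mu(A) > 0$. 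So the whole theorem reduces to building a random set of dimension $d-1$ carrying a measure with the uniform tube estimate $\mu(T) \lesssim w(T)^{d-1}$.

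First I would set up the random cascade: fix a large integer $M$, and in passing from scale $2^{-n}$ to $2^{-(n+1)}$ subdivide each surviving cube into $2^{Md}$ subcubes and retain each independently with probability $p = 2^{-M}$ (so the expected number retained is $2^{M(d-1)}$, the ``right'' count for dimension $d-1$); iterate and let $\mu$ be the natural limit measure (the weak limit of normalized Lebesgue measure on the $n$-th generation set, which exists a.s.\ and is nonzero with positive probability by a standard second-moment/martingale argument). The dimension statements — both Hausdorff and box — follow from standard estimates for such fractal percolation: with positive probability the box dimension of $A$ is $d-1$ because the generation-$n$ set is covered by roughly $2^{n(d-1)}$ cubes of side $2^{-n}$, and Hausdorff dimension $d-1$ (the lower bound) follows from the mass distribution principle applied to $\mu$ once one has $\mu(B(x,r)) \lesssim r^{d-1+o(1)}$, which again is routine for these measures. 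Pure unrectifiability should come essentially for free: a $C^1$ curve (or rectifiable set) meets each generation-$n$ cube it enters in a set of diameter $\sim 2^{-n}$, so it can be covered by $O(2^{n(d-1)})$ of these cubes with total ``$\mu$-mass times survival'' that one shows tends to zero; more directly, $\mu$ gives zero mass to any rectifiable curve because such a curve intersects only $O(2^n)$ of the $2^{n(d-1)}$-many surviving cubes at level $n$, giving mass $O(2^n \cdot 2^{-n(d-1)}) \to 0$ for $d \ge 2$. Alternatively one invokes that a measure satisfying the tube estimate with exponent $d-1$ can charge no rectifiable curve when $d\ge 3$, and handles $d=2$ separately.

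The heart of the matter — and the step I expect to be the main obstacle — is the \emph{uniform} tube estimate: almost surely (or with positive probability), \emph{every} tube $T$ of width $w$ satisfies $\mu(T) \le C w^{d-1}$ with a deterministic constant $C$. For a fixed tube of width $w = 2^{-k}$, the intersection with the generation-$k$ set meets $O(2^{k(d-1)})$ cubes, each surviving with the appropriate probability, and a large-deviation / moment bound shows $\mu(T) \lesssim w^{d-1}$ with overwhelming probability; the difficulty is upgrading this to \emph{all} tubes simultaneously. This requires a net argument: one discretizes the (compact, finite-dimensional) space of tubes of each dyadic width $2^{-k}$ into a net of size $\exp(O(k))$ or polynomial in $2^k$, proves the estimate for each tube in the net with a failure probability small enough (superexponentially small in $k$, or at least summable after multiplying by the net cardinality) to survive a union bound, and then controls the error when passing from a net tube to a nearby arbitrary tube (using that a $w$-tube is contained in a finite union of net tubes of comparable width). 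Summing the failure probabilities over all $k$ via Borel–Cantelli then gives that a.s.\ the estimate holds for all tubes of all sufficiently small widths, and for large widths it is trivial since $\mu$ is a probability measure. The technical crux is thus choosing $M$ large enough and designing the cascade (or replacing independent percolation by a more robust construction with better concentration, e.g.\ a spatially-spread-out selection rule) so that the per-tube failure probability beats the net cardinality — this is exactly where the random construction must be tuned, and where I would expect the paper to do its real work, likely via the intersection-of-random-measures-with-curves machinery advertised in the abstract rather than the crude union bound sketched here.
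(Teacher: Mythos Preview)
Your strategy---random cascade measure plus the uniform tube bound $\mu(T)\le C\,w(T)^{d-1}$ via a per-line large-deviation estimate, a net over lines, and Borel--Cantelli---is exactly the paper's. The gap is at the point you yourself flag. With independent percolation at $p=2^{-M}$, the number of surviving cubes at scale $2^{-k}$ is $\Theta(2^{k(d-1)})$; since a line meets only $O(2^k)$ cubes at that scale, the expected number of \emph{surviving} cubes on any fixed line is $\Theta(1)$, uniformly in $k$. The Hoeffding bound on the martingale increment $Y_{n+1}^\ell-Y_n^\ell$ is then only $\exp(-\Omega(\kappa^2))$ with \emph{no} $n$-dependence in the exponent, and this cannot beat a net of $\exp(\Theta(n))$ lines for any summable threshold sequence $(\kappa_n)$. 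Increasing $M$ rescales the steps but does not change this balance (heuristically, along a fixed line your process is a \emph{critical} branching process, with polynomial rather than exponential tails). The paper's missing ingredient is a logarithmic thickening: it works with the gauge $h(t)=t^{d-1}|\log t\,\log|\log t||^{-3}$, so that there are $\Theta\bigl(2^{n(d-1)}(n\log n)^{3}\bigr)$ cubes at level $n$. One can then take $\kappa_n=C\sqrt{n\,2^{(d-1)n}h(2^{-n})}$, which is summable, while the Hoeffding exponent becomes $\Omega(C^2 n)$ and beats the $O(1)^n$ net for large $C$. The set still has Hausdorff and box dimension $d-1$ because $\log h(t)/\log t\to d-1$, but $\mathcal{H}^{d-1}(A)=\infty$; the paper notes that the conclusion fails for any gauge with $\liminf_{t\downarrow 0}h(t)t^{1-d}>0$, so this correction is essential and not merely a technical convenience. (The paper also replaces independent percolation by a ``keep exactly one subcube'' rule, making the cube counts deterministic; this streamlines the analysis but is secondary to the gauge-function idea.)

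A smaller issue: your unrectifiability argument breaks down when $d=2$, since $2^{n}\cdot 2^{-n(d-1)}=O(1)$ does not tend to $0$, and the tube estimate $\mu(T)\le Cw$ alone cannot exclude $A$ containing a line segment (a segment itself satisfies that bound). The paper proves pure unrectifiability by a separate Lebesgue-density-point argument that uses the combinatorial structure of the construction.
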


We remark that the non-tube null sets of fractional dimension constructed in \cite{CSV07} are unions of spheres, and therefore fail to be purely unrectifiable.

We obtain a finer result in terms of gauge functions. Recall that a function $h:(0,\infty)\to (0,\infty)$ is called a gauge function if it is non-decreasing, continuous and $\lim_{t\downarrow 0} h(t)=0$ (sometimes continuity is not assumed). Given a gauge function $h$, the $h$-dimensional Hausdorff measure $\mathcal{H}^h$ is defined as
\[
\mathcal{H}^h(E) = \lim_{\delta\to 0} \inf\left\{ \sum_{i=1}^\infty h(\diam(E_i)) : E\subset\bigcup_i E_i, \diam(E_i)<\delta \right\}.
\]
This is always a measure on the Borel $\sigma$-algebra. When $h(t)=t^\beta$, we recover the usual $\beta$-dimensional Hausdorff measure $\mathcal{H}^\beta$. See e.g. \cite[Section 2.5]{Falconer03} for further details.

\begin{theorem} \label{thm:gauge-main-result}
Let $h:(0,\infty)\to(0,\infty)$ be a gauge function such that  $h(2t) \le 2^d h(t)$,
and
\begin{equation} \label{eq:growth-h}
\int_0^1 t^{-1}\sqrt{t^{1-d}|\log(t)| h(t)}\,dt<+\infty.
\end{equation}
Then there exists a compact set $A\subset\mathbb{R}^d$ with the following properties:
\begin{enumerate}
\item For each $n$, $A$ can be covered by $C/h(2^{-n})$ balls of radius $2^{-n}$, where $C>0$ depends only on $d$.
\item $0<\mathcal{H}^h(A)<\infty$.
\item\label{ntn} If $B\subset A$ is a Borel set with $\mathcal{H}^h(B)>0$, then $B$ is not tube null. In particular, $A$ is not tube null.
\end{enumerate}
\end{theorem}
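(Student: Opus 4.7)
The plan is to realise $A$ as the topological support of a random measure $\mu$ designed to satisfy two Frostman-type bounds simultaneously, almost surely:
\[
 \mu(B(x,r)) \le C\, h(r), \qquad \mu(T) \le C\, w(T)^{d-1}
\]
for every ball $B(x,r)$ and every tube $T$. The first inequality together with the mass distribution principle gives $\mathcal{H}^h(A)>0$, and combined with the covering bound (1), which will be built into the construction, also $\mathcal{H}^h(A)<\infty$, yielding (2); it also forces $\mu(B)>0$ whenever $\mathcal{H}^h(B)>0$. The tube inequality then gives (3): if $B\subset A$ has $\mathcal{H}^h(B)>0$ and $B\subset\bigcup_j T_j$, then $\mu(B) \le \sum_j \mu(T_j) \le C\sum_j w(T_j)^{d-1}$, so $\sum_j w(T_j)^{d-1} \ge \mu(B)/C > 0$ and $B$ is not tube null.

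For the construction I would take a random dyadic Cantor set. Starting from the unit cube, at each level $n\ge 1$ retain each of the $2^d$ dyadic children of every surviving level-$(n-1)$ cube independently with probability $p_n := h(2^{-n})/h(2^{-n+1})$; the doubling assumption $h(2t)\le 2^d h(t)$ ensures $p_n\in[2^{-d},1]$. The expected number of surviving level-$n$ cubes is $h(1)/h(2^{-n})$, which matches the covering count in (1), and the normalised Lebesgue measure on the level-$n$ set converges a.s.\ to a limiting random probability measure $\mu$ supported on $A$. A standard second-moment plus Borel--Cantelli argument promotes the covering statement and mass convergence to a.s.\ statements, and an analogous concentration argument applied to the branching process of descendants of a given cube delivers the ball Frostman bound.

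The technical core is the tube estimate. For a fixed tube $T$ of width $w\approx 2^{-n}$, the expected mass is of order $w^{d-1}$, and $\mu(T)$ decomposes as a sum of $\sim w^{d-1}/h(w)$ essentially Bernoulli contributions of size $\sim h(w)$; a Chernoff or Azuma bound along the natural filtration yields a subgaussian tail with variance of order $w^{d-1}h(w)$. Discretising the family of tubes by an $\eta$-net in direction and offset produces $\lesssim w^{-O(d)}$ representatives, and a union bound over this net together with a dyadic decomposition in scale forces a worst-case deviation of order $\sqrt{w^{d-1}h(w)|\log w|}$. Rewriting the relative deviation as $\sqrt{w^{1-d}|\log w|\,h(w)}$ and summing over dyadic scales $w=2^{-n}$ is precisely the Riemann sum for the integral in \eqref{eq:growth-h}, whose convergence is exactly what is needed to keep the cumulative fluctuation $O(w^{d-1})$ uniformly in $T$.

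The principal obstacle, where I expect most of the work to lie, is making this Chernoff-type argument uniform in $T$: the survival indicators at different levels are not independent, so one must set up a martingale along the filtration generated by the construction and control the increments cube by cube, and the passage from the $\eta$-net of tubes back to arbitrary tubes needs a modest monotonicity/thickening step. A secondary subtlety is that the additive fluctuations must be summed across all scales coherently, which is where the specific form of \eqref{eq:growth-h} (rather than a weaker decay of $h$) becomes indispensable. Once the uniform tube bound is in hand, the Frostman deductions sketched in the first paragraph complete the proof.
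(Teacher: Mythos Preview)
Your approach is correct and follows the same overall architecture as the paper: build a random dyadic set, control a martingale along the filtration via a Hoeffding/Azuma bound, take a union over an exponential-size net of directions, and observe that the resulting increment bound $\sqrt{w^{1-d}|\log w|\,h(w)}$ sums precisely when \eqref{eq:growth-h} holds.

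The one genuine difference is the random model. You use independent Bernoulli retention with probability $p_n=h(2^{-n})/h(2^{-n+1})$; the paper instead chooses, for each surviving level-$n$ cube, \emph{exactly one} of its $2^d$ children (or keeps all of them), with the binary sequence $\{a_n\}$ tuned so that the surviving count is \emph{deterministically} $P_n=\Theta(1/h(2^{-n}))$. This buys the paper properties (1), (2) and the ball Frostman bound for free: each level-$n$ cube carries mass exactly $P_n^{-1}$, so $\mu(E)=\Theta_d(\mathcal{H}^h(E))$ on Borel subsets of $A$ with no probabilistic argument at all. In your model the cube masses are random martingale limits and you must run the extra concentration arguments you allude to (second moment for the global count, uniform tail bounds on the normalized descendant mass for the Frostman inequality). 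This is doable but is genuinely additional work, and you should be aware that near the critical case $p_n\approx 2^{-d}$ the branching process can be delicate. A second, smaller difference: the paper applies the martingale argument to the one-dimensional quantity $Y_n^\ell=2^{dn}P_n^{-1}\mathcal{H}^1(A_n\cap\ell)$ for lines $\ell$, proves $\sup_{n,\ell}Y_n^\ell<\infty$ a.s., and only then passes to tubes via Fubini; you work with tube masses directly. Both routes are fine, but the line formulation makes the increment structure and the discretization lemma slightly cleaner, and also yields the fiber-covering corollary (their Theorem~1.6) for free.
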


One obtains Theorem \ref{thm:non-tube-null} by taking e.g. $h(t)=t^{d-1}|\log t\log|\log t||^{-3}$; see Section \ref{sec:proofs}.

The condition $h(2t)\le 2^d h(t)$ is very mild for a subset of $\mathbb{R}^d$. The key assumption is \eqref{eq:growth-h}; it says that $A$ is ``larger than $d-1$ dimensional by at least a logarithmic factor''. Theorem \ref{thm:gauge-main-result} fails if $\liminf_{t\downarrow 0} h(t) t^{1-d} > 0$, see \cite[Proposition 7]{CSV07}. It remains an open problem to determine the exact family of gauge functions for which non tube null sets exist.

\subsection{Tubes around more general curve families}

When $d=2$, we are also able to treat tubes around more general curves. Given a family of curves $\mathcal{F}$ in $\R^2$, we call the $w$-neighborhood of $F\in\mathcal{F}$ an \textit{$\mathcal{F}$-tube} of width $w=w(T)$. We say that a set $A\subset\mathbb{R}^2$ is \textit{$\mathcal{F}$-tube null} if, for every $\delta>0$, there is a countable covering $\{ T_j\}$ of $A$ by $\mathcal{F}$-tubes, with $\sum_j w(T_j)<\delta$.

Given $k\in\mathbb{N}$, let $\mathcal{P}_{k}$ be the family of (real) algebraic curves of degree at most $k$. Observe that $\mathcal{P}_1$-tube null is just tube null. By imposing a slightly stronger integrability condition for $h$, we obtain the following generalisation of Theorem \ref{thm:non-tube-null} for $\mathcal{P}_k$.

\begin{theorem}\label{thm:polynomial}
For $d=2$ and $k\in\N$, Theorem 1.2 continues to hold if in \eqref{ntn}, ``tube null'' is replaced by ``$\mathcal{P}_k$-tube null''. and if \eqref{eq:growth-h} is replaced by
\begin{equation} \label{eq:growth-h-polynomial}
\int_0^1 t^{-1}|\log(t)|\sqrt{t^{1-d} h(t)}\,dt<+\infty.
\end{equation}
\end{theorem}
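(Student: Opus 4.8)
The plan is to follow the same random-construction strategy used for Theorem~\ref{thm:gauge-main-result}, modifying only the part of the argument where the tube null hypothesis is used to derive a contradiction. Recall that for the straight-line case one builds a random Cantor-type measure $\mu$ (a fractal percolation-style measure with the prescribed branching numbers so that the support has the stated covering numbers and $0<\mathcal H^h<\infty$), and one shows that with positive probability, for \emph{every} line $\ell$ and every $w>0$, the $\mu$-measure of the $w$-tube around $\ell$ is at most $C\,g(w)$ for a suitable $g$ with $g(w)/w\to 0$; this is what forbids $\mu$ (hence any subset of positive $\mathcal H^h$ measure) from being tube null. Concretely the bound on $\EE\,\mu(T)$ for a single tube comes from the intersection of the curve with the relevant scales of the construction, and then one upgrades the single-tube estimate to a \emph{uniform} estimate over all tubes by a net argument: one discretises the parameter space of curves at each scale $2^{-n}$, controls the number of essentially distinct curves, applies a large deviation / Borel--Cantelli bound, and pays for the discretisation with the extra $|\log t|$ factor. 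The point of replacing \eqref{eq:growth-h} by \eqref{eq:growth-h-polynomial} is precisely to absorb the larger cost of this net argument when the curve family is bigger.

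The key steps I would carry out are as follows. First, fix $k$ and note that the family $\mathcal P_k$ of algebraic curves of degree $\le k$ in $\R^2$ is parametrised by the coefficient vector, a point in (projectivised) $\R^{N}$ with $N=N(k)$ fixed; a $\mathcal P_k$-tube of width $w$ is then determined by such a vector together with $w$. Second, I would prove the basic geometric input: a curve $F\in\mathcal P_k$ meets any disc of radius $r$ in a set that, for the purpose of counting the scale-$2^{-n}$ cells of the construction it can hit, behaves like finitely many arcs — by Bézout's theorem the number of connected components, and more importantly the number of $2^{-n}$-cells at a given scale that a width-$w$ tube around $F$ can intersect inside a unit ball, is $O_k(2^{n})$ (the implied constant depending only on $k$), exactly as for a line up to the constant. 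This is the crucial place where one must be careful: one needs a uniform (in the curve) bound on how badly a degree-$k$ curve can fold, which is where one invokes quantitative algebraic geometry (Bézout, or a Vitali-type covering of the curve by boundedly many graph pieces with controlled slope). Third, with this in hand the single-tube expectation estimate $\EE\,\mu(T)\lesssim_k w\cdot\sqrt{\text{(geometric factor)}}$ goes through verbatim as in the line case, since the fractal measure only ``sees'' the curve through these cell counts. Fourth, I would run the net argument: at scale $2^{-n}$ discretise the coefficient space of $\mathcal P_k$ into $\lesssim (2^{n})^{N}$ pieces so that tubes with nearby parameters are comparable; apply the concentration estimate for $\mu(T)$ to each representative and a union bound; the $(2^{n})^{N}$ factor contributes a further $\log$ term in the summability condition, and \eqref{eq:growth-h-polynomial} — which has $|\log t|$ instead of $\sqrt{|\log t|}$ — is exactly strong enough to keep the resulting series convergent, so by Borel--Cantelli, almost surely $\mu(T)\le C_k\,g(w(T))$ simultaneously for all $\mathcal P_k$-tubes with $g(w)/w\to 0$. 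Finally, conclude as before: if $B\subset A$ had $\mathcal H^h(B)>0$ it would carry (a restriction of) $\mu$, and covering $B$ by $\mathcal P_k$-tubes $\{T_j\}$ with $\sum_j w(T_j)<\delta$ would force $\mu(B)\le \sum_j \mu(T_j)\le C_k\sum_j g(w(T_j))$, which can be made arbitrarily small, contradicting $\mu(B)>0$.

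The main obstacle I anticipate is the uniformity in the net argument over the \emph{non-compact} and higher-dimensional parameter space $\mathcal P_k$: unlike lines (a $2$-parameter family with a natural compact parametrisation after restricting to the unit ball), degree-$k$ curves can degenerate, can be nearly tangent to many cells of the construction, and the naive count of ``essentially distinct'' tubes at scale $2^{-n}$ could blow up faster than polynomially in $2^{n}$ if one is not careful about how the width $w$ interacts with the local geometry. Handling this requires (i) a clean statement that only curves with $w \ge 2^{-n}$ need be considered at scale $n$ (smaller tubes are handled at finer scales, or contribute negligibly), (ii) a reduction to a compact parameter set by normalising the leading coefficients / restricting to tubes meeting $\mathbf B_2$, and (iii) the quantitative Bézout-type bound mentioned above to ensure the per-tube estimate has a constant independent of which curve we picked. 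Once these three points are pinned down, the rest is a routine repetition of the proof of Theorem~\ref{thm:gauge-main-result} with $\sqrt{|\log t|}$ upgraded to $|\log t|$, so I would organise the write-up to isolate exactly the algebraic-geometric lemma and the modified net count, and then simply indicate that the probabilistic estimates carry over unchanged.
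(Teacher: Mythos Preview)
Your overall strategy is correct and matches the paper: same random construction, same large-deviation lemma (which only needs $\mathcal H^1(\gamma\cap Q)=O_k(\diam Q)$, hence extends to $\mathcal P_k$), same Borel--Cantelli endgame, and the same Fubini argument to pass from $\sup_{n,\gamma}Y_n^\gamma<\infty$ to $\mu(\gamma(\delta))=O_k(\delta)$. The difference, and the gap in your proposal, is the net argument.

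You propose to discretise the coefficient space of $\mathcal P_k$ into $(2^n)^{N(k)}=O_k(1)^n$ pieces. If that actually worked---i.e., if nearby coefficients implied the cell-by-cell comparison $\mathcal H^1(\gamma\cap Q)\le \mathcal H^1(\gamma'\cap Q)+o(2^{-n})$---then the union bound would read $O_k(1)^n\exp(-C^2\Omega(n))$, exactly as for lines, and condition \eqref{eq:growth-h} would already suffice; you would not need \eqref{eq:growth-h-polynomial}. So your explanation of why the extra $|\log t|$ is required is inconsistent with your claimed net size. The real obstacle (which you flag but do not resolve) is that closeness in coefficients does not uniformly control closeness of the curves in the unit square, particularly near degenerations and near tangencies with the dyadic grid; the paper explicitly remarks that it could neither prove nor locate a polynomial covering bound for $\mathcal P_k$ in the Hausdorff metric.

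What the paper does instead is: (i) use B\'ezout to reduce to the family $\mathcal Q_k$ of pieces that are graphs of convex, increasing, $1$-Lipschitz functions; (ii) prove a length-continuity lemma for such graphs, $|\mathcal H^1(\gamma_1)-\mathcal H^1(\gamma_2)|=O(|f_1-f_2|_\infty)$, which is what turns Hausdorff closeness into the needed per-cell length comparison; and (iii) show, by a geometric B\'ezout-based argument in the Hausdorff metric, that $\mathcal Q_k$ can be covered by $\exp(O_k(|\log\delta|^2))$ balls of radius $\delta$. This gives a net of size $\exp(O_k(n^2))$, and it is precisely this $n^2$ in the exponent that forces $\kappa_n\asymp n\sqrt{2^{(d-1)n}h(2^{-n})}$ in the Borel--Cantelli step, hence condition \eqref{eq:growth-h-polynomial}. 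Your write-up should replace the coefficient-space discretisation with this Hausdorff-metric covering (or supply a genuine proof of the polynomial bound, which would in fact improve the theorem).
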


The family of algebraic curves of a bounded degree is ``essentially finite dimensional'', see Lemma \ref{lem:almost-finite-box-dim}. These results pose the question of how large a family of curves $\mathcal{F}$ may be so that there exist sets of less than full Hausdorff dimension which are non $\mathcal{F}$-tube null. The family $\mathcal{P}=\cup_{k\in\N}\mathcal{P}_{k}$ does not have this property for trivial reasons: it is Hausdorff dense in the compact subsets of the unit square. If we instead consider the family $\mathcal{Q}\subset\mathcal{P}$ of algebraic curves which are graphs of functions of either $x$ or $y$ with derivative at most $1$, then the situation is much more subtle. Indeed, Alberti, Cs\"ornyei, and Preiss (See \cite[Theorem 2]{ACP05}) proved that for the family  $\mathcal{L}$ of $1$-Lipschitz graphs in the coordinate directions, any Lebesgue-null set is $\mathcal{L}$-tube null. By approximation, the same can be deduced to hold for $\mathcal{Q}$.

One is then led to ask what the situation is for infinite dimensional families of curves which nonetheless carry more structure than just being Lipschitz. One of the most natural such families is the following: let $\mathcal{C}$ be the family of curves which are graphs of a convex function $f:[0,1]\to \R$. It is not hard to see $\mathcal{C}$ is not doubling in the Hausdorff metric (see Section \ref{sec:proofs2}). Nevertheless, in contrast with the result of Alberti, Cs\"{o}rnyei and Preiss, there are sets of dimension less than $2$ which are not $\mathcal{C}$-tube null.

\begin{theorem} \label{thm:convex}
For every $5/3<\beta<2$, there exists a set $A\subset\R^2$ with $0<\mathcal{H}^\beta(A)<\infty$ which is not $\mathcal{C}$-tube null.
\end{theorem}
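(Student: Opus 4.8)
The plan is to obtain $A$ as the support of one of the random measures constructed earlier in the paper, and to rule out $\mathcal{C}$-tube nullity by a uniform upper bound on the $\mu$-measure of convex tubes. Fix $5/3<\beta<2$ (so in particular $\beta>1$) and let $\mu$ be the random measure on $[0,1]^2$ underlying the construction behind Theorem \ref{thm:gauge-main-result}, applied with $d=2$ and $h(t)=t^{\beta}$ (which satisfies \eqref{eq:growth-h} precisely because $\beta>1$), or a minor variant of it: almost surely $\EE\mu=\leb|_{[0,1]^2}$, $0<\mu(\R^2)<\infty$, $\mu(B(x,r))\le C r^{\beta}$, and $A:=\spt\mu$ is compact with $0<\mathcal{H}^{\beta}(A)<\infty$ and the box-counting bound of item (1). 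The reduction is then routine: if, almost surely, there is a finite $C$ with $\mu(T)\le C\,w(T)$ for every $\mathcal{C}$-tube $T$ (handling the finitely many scales $w(T)\ge 2^{-n_0}$ by $\mu(T)\le\mu(\R^2)$), then any cover $\{T_j\}$ of $A$ by $\mathcal{C}$-tubes satisfies $0<\mu(A)\le\sum_j\mu(T_j)\le C\sum_j w(T_j)$, so $\sum_j w(T_j)\ge\mu(A)/C$, i.e.\ $A$ is not $\mathcal{C}$-tube null. Thus everything reduces to this uniform tube estimate.

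To prove the estimate I would combine two ingredients. The first is the \emph{metric entropy of convex curves}: after reducing $\mathcal{C}$ to convex graphs of slope at most $1$ (the nearly vertical portions of a convex graph form a convex graph over the $y$-axis of small slope, treated symmetrically), the classical bound of Bronshtein says that a uniformly bounded convex function on $[0,1]$ lies within $\e$ in the supremum norm of a piecewise linear convex function with $O(\e^{-1/2})$ pieces. This yields, for each dyadic width $w=2^{-n}$, a net $\mathcal{N}_n$ of at most $\exp(C 2^{n/2})$ convex tubes of width $\asymp 2^{-n}$ such that every $\mathcal{C}$-tube of width $\le 2^{-n}$ is contained in a member of $\mathcal{N}_n$, and each member is a union of $O(2^{n/2})$ ``flat'' sub-tubes of width $\asymp 2^{-n}$ and length $O(2^{-n/2})$. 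The second is the \emph{concentration of $\mu$ on convex tubes}: for $T\in\mathcal{N}_n$ one has $\EE\mu(T)=\leb(T)\asymp 2^{-n}$, and, decomposing $T$ into its flat pieces $P_i$ and feeding each $P_i$ into the large-deviation estimates used for Theorems \ref{thm:gauge-main-result} and \ref{thm:polynomial} while exploiting the spatial independence of the martingale between the $P_i$, one aims at a tail bound $\PP(\mu(T)>K 2^{-n})\le\exp(-c(K)\,2^{n/2})$ with $c(K)\to\infty$ as $K\to\infty$. Granting both ingredients, choose $K$ with $c(K)>C$; then $\sum_n|\mathcal{N}_n|\,\PP(\mu(T)>K2^{-n})<\infty$, so by Borel--Cantelli almost surely $\mu(T)\le K2^{-n}$ for every $T\in\mathcal{N}_n$ and every large $n$, and since a $\mathcal{C}$-tube of width $w\in[2^{-n-1},2^{-n}]$ lies inside a member of $\mathcal{N}_n$ of width $\le 2w$, this gives $\mu(T)\le C'w(T)$ for all $\mathcal{C}$-tubes.

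The hard part, and the source of the restriction $\beta>5/3$, is the concentration step. For Theorems \ref{thm:gauge-main-result} and \ref{thm:polynomial} the net $\mathcal{N}_n$ is only polynomial in $2^{n}$, so a tail bound that is merely polynomially (or mildly sub-exponentially, as the logarithmic factors in \eqref{eq:growth-h}--\eqref{eq:growth-h-polynomial} suggest) small already suffices; here the exponentially large entropy of convex curves forces a genuinely exponential bound $\exp(-C 2^{n/2})$ for \emph{arbitrarily large} $C$, which cannot be reached by a crude union bound over the $\asymp 2^{n/2}$ flat pieces: a flat piece $P_i$ of width $2^{-n}$ and length $2^{-n/2}$ typically carries mass of order $\leb(P_i)=2^{-3n/2}$, but conditionally on its scale-$2^{-n/2}$ ancestor cell surviving — an event of probability $\asymp 2^{-n(2-\beta)/2}$ — it carries mass as large as $\asymp 2^{-n(\beta+1)/2}$, and the survival events along the curve are coupled through a branching-type structure at scales coarser than $2^{-n/2}$. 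Squeezing the sum of these $\asymp 2^{n/2}$ contributions down to $K2^{-n}$ with failure probability $\exp(-c(K)2^{n/2})$ requires exploiting the spatial independence and the convex structure rather carefully, and it is the balance between the entropy exponent $\tfrac12$, the aspect ratio $2^{-n/2}\times 2^{-n}$ of the pieces, and the density growth $2^{n(2-\beta)}$ of $\mu$ that produces the threshold $\beta>5/3$; as with Theorems \ref{thm:gauge-main-result} and \ref{thm:polynomial}, one expects the argument to yield a gauge-function version, with an integrability hypothesis on $h$ specializing to $\beta>5/3$ when $h(t)=t^{\beta}$.

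Finally I would record the remark used in the introduction that $\mathcal{C}$ is not doubling in the Hausdorff metric: the convex graphs differing from a fixed strictly convex graph by at most $\e$ contain $\approx\exp(c\,\e^{-1/2})$ mutually $\e$-separated members (this is again the Bronshtein estimate), so the Alberti--Cs\"ornyei--Preiss theorem does not apply to $\mathcal{C}$ and Theorem \ref{thm:convex} is not in conflict with it.
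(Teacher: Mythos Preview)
Your overall architecture is the same as the paper's: random dyadic construction with $h(t)=t^{\beta}$, Bronshtein-type entropy $\exp(O(\delta^{-1/2}|\log\delta|))$ for convex graphs, a concentration-versus-entropy union bound, and Borel--Cantelli. The reduction to a uniform bound $\mu(T)\le C\,w(T)$ is also exactly how the paper concludes. Where you diverge from the paper is in the concentration step, and there the proposal has a genuine gap.

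You aim for a \emph{single-scale} tail bound $\PP(\mu(T)>K\,2^{-n})\le\exp(-c(K)\,2^{n/2})$ for each $T$ in a net $\mathcal{N}_n$ of size $\exp(C\,2^{n/2})$, and then choose $K$ with $c(K)>C$. But for a fixed convex curve $\gamma$, the martingale $(Y^{\gamma}_m)_m$ has increments with sub-Gaussian parameter $\Theta(2^{(1-\beta)m})$ (this is what Lemma~\ref{lem:large-deviation} gives), so the \emph{cumulative} sub-Gaussian parameter over all $m\le n$ is $\sum_m 2^{(1-\beta)m}=O(1)$ once $\beta>1$. Hence the best direct bound one can extract is $\PP(Y^{\gamma}_n>K)\le\exp(-\Omega(K^2))$, \emph{independent of $n$}; this is hopeless against an entropy of $\exp(C\,2^{n/2})$, and no decomposition into $O(2^{n/2})$ flat pieces rescues it, because the pieces are strongly coupled through common ancestors at coarse scales (as you yourself note). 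Your heuristic for the threshold $5/3$ via the aspect ratio of the pieces does not lead to an inequality that singles out $5/3$.

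The paper avoids this by working \emph{iteratively}: it controls $M_{n+1}-M_n$, where $M_n=\sup_{\gamma}Y^{\gamma}_n$, via a net at step $n$ of precision $\delta=2^{-\eta n}$ (Lemma~\ref{lem:many-convex-to-all-convex}). At that single step the increment enjoys the strong bound $\exp(-\Omega(2^{(\beta-1)n}))$ from Lemma~\ref{lem:large-deviation}, which beats the step-$n$ entropy $\exp(O(n\,2^{\eta n/2}))$ provided $\eta<2(\beta-1)$; the approximation error coming from the net is $O(2^{(3-\beta-\eta)n})$, which is summable provided $\eta>3-\beta$. The two constraints are compatible precisely when $\beta>5/3$, and this is where the threshold actually comes from. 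In short, the crucial idea you are missing is to spend the exponential concentration at scale $n$ against the entropy \emph{at that same scale}, rather than against the entropy of all convex tubes of width $2^{-n}$; the free parameter $\eta$ governing the net precision is what makes this trade-off work and produces the $5/3$.
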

It seems very likely that the method can be pushed to show that $5/3$ can be replaced by $3/2$ in the above theorem. We do not know what is the best possible value, and conjecture that $5/3$ cannot be replaced by $1$, i.e. there is $\delta>0$ such that every set of Hausdorff dimension $1+\delta$ {\em is} $\mathcal{C}$-tube null.

\subsection{Further results}

As a corollary of the proofs, we can extend \cite[Theorem 1]{Carbery09} to one of the endpoints and a wider class of tubes.
\begin{theorem} \label{thm:bounded-density-projections}
For any $\beta>d-1$, there exists a set $A\subset\mathbb{R}^d$ with positive and finite $\beta$-dimensional Hausdorff measure $\mathcal{H}^\beta$, such that
\[
\sup_{T} \frac{\mathcal{H}^\beta(A\cap T)}{w(T)^{d-1}} <+\infty,
\]
where the supremum is over all (linear) tubes. In dimension $d=2$, the result also holds when the supremum is taken over all $\mathcal{P}_k$-tubes (for fixed $k$).
\end{theorem}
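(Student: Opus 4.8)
The plan is to re-use the random measure $\mu$ produced in the proofs of Theorems~\ref{thm:gauge-main-result} and~\ref{thm:polynomial}, applied now to the power gauge $h(t)=t^\beta$. Because $\beta>d-1$, the integrand in \eqref{eq:growth-h} equals $t^{-1}\sqrt{t^{\beta+1-d}|\log t|}=t^{-1+(\beta+1-d)/2}\sqrt{|\log t|}$, which is integrable on $(0,1)$ since $(\beta+1-d)/2>0$; the same computation gives \eqref{eq:growth-h-polynomial}, so both constructions apply. They yield a random compact set $A=\spt\mu$ and a random measure $\mu$ which, almost surely, has $\EE\mu\asymp\leb^d|_{[0,1]^d}$ and is Ahlfors $\beta$-regular on $A$, i.e.\ $c\,r^\beta\le\mu(B(x,r))\le C\,r^\beta$ for $x\in A$ and $0<r<1$. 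The mass distribution principle gives $\mathcal H^\beta(A)\gtrsim\mu(A)>0$, and the standard comparison of Hausdorff measure with densities gives $\mathcal H^\beta|_A\le C'\mu$; in particular $0<\mathcal H^\beta(A)<\infty$, and $\mathcal H^\beta(A\cap T)\le C'\mu(T)$ for \emph{every} Borel set $T$. Hence the theorem follows once one shows that, almost surely,
\begin{equation}\label{eq:target-uniform-tube-bound}
\mu(T)\le C\,w(T)^{d-1}\qquad\text{for all tubes }T
\end{equation}
(resp.\ all $\mathcal P_k$-tubes when $d=2$), with a deterministic $C=C(d,\beta)$ (resp.\ $C(\beta,k)$).

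For a \emph{single} tube, \eqref{eq:target-uniform-tube-bound} is a concentration statement around the mean. Since $A\subset[0,1]^d$ I may assume $T$ has length $\lesssim 1$, so that $\EE[\mu(T)]\asymp\leb^d(T)\asymp w(T)^{d-1}$; for $\mathcal P_k$-tubes the same holds because a degree-$\le k$ algebraic curve meeting the unit cube has length $O_k(1)$, whence its $w$-neighbourhood has area $\lesssim_k w$. The spatial independence of the random cascade---precisely the ``intersection with curves'' mechanism driving the proofs of Theorems~\ref{thm:gauge-main-result}--\ref{thm:polynomial}---exhibits $\mu(T)$, up to harmless multi-scale telescoping, as a sum of conditionally independent contributions, one per level-$n$ cube (with $w=2^{-n}$) met by $T$: there are $\asymp 2^n$ such cubes, each surviving with probability $\asymp 2^{(\beta-d)n}$ and then carrying mass $\asymp 2^{-\beta n}$, so the total has mean $\asymp 2^{(1-d)n}=w^{d-1}$ and---crucially, because $\beta>d-1$---variance only $\asymp 2^{(1-\beta-d)n}=o(w^{2(d-1)})$. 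Bernstein's inequality (equivalently, the moment bounds established in the proofs cited above) then gives a tail estimate of the form
\[
\PP\bigl(\mu(T)\ge\lambda\,w(T)^{d-1}\bigr)\ \le\ \exp\!\bigl(-c\,\lambda\,2^{(\beta-d+1)n}\bigr),\qquad \lambda\ge\lambda_0,
\]
the decisive feature being the diverging factor $2^{(\beta-d+1)n}$.

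Next I would run a union bound over scales. For $w=2^{-n}$, cover the tubes of width $\le 2^{-n}$ by a family $\mathcal N_n$ which is $2^{-n}$-dense in the Hausdorff metric and in which every such tube is contained in a member of width $\asymp 2^{-n}$; then $\#\mathcal N_n\lesssim 2^{O(d)n}$ for linear tubes (directions times offsets), while $\#\mathcal N_n\lesssim 2^{O_k(1)n}$ for $\mathcal P_k$-tubes by Lemma~\ref{lem:almost-finite-box-dim}. Once $2^{(\beta-d+1)n}\ge n$, i.e.\ for all $n\ge n_0(d,\beta,k)$, the tail bound yields $\PP\bigl(\exists\,T\in\mathcal N_n:\ \mu(T)\ge\lambda_1\,2^{-(d-1)n}\bigr)\le 2^{O(1)n}\exp(-c\lambda_1 2^{(\beta-d+1)n})$, which is summable in $n$; by Borel--Cantelli, almost surely $\mu(T)\le\lambda_1\,w(T)^{d-1}$ for every tube of width $\le 2^{-n_0}$, and for the finitely many coarser scales one uses $\mu(T)\le\mu([0,1]^d)\lesssim 1\lesssim 2^{n_0(d-1)}w(T)^{d-1}$. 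This establishes \eqref{eq:target-uniform-tube-bound}, and hence the theorem, with $C$ depending on $d$, $\beta$ (and $k$).

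The main difficulty is attaining the \emph{endpoint} exponent $w(T)^{d-1}$---the exact power occurring in the definition of tube-nullity---rather than $w(T)^{d-1-\e}$: it forces the logarithmic losses incurred in passing from a single tube to the continuum of all tubes to be absorbed into the constant, which works precisely because the strict inequality $\beta>d-1$ both renders the relevant multi-scale estimates summable and produces the polynomially growing factor $2^{(\beta-d+1)n}$ in the per-tube deviation bound. Correspondingly the constant $C(d,\beta)$ degrades (and the threshold $n_0$ diverges) as $\beta\downarrow d-1$, consistent with the failure of the statement at $\beta=d-1$ (cf.\ the remark after Theorem~\ref{thm:gauge-main-result}). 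The only other, more routine, points---needed just for $\mathcal P_k$-tubes in the plane---are that such tubes have area $\lesssim_k w$ and that the family is effectively finite-dimensional; both follow from B\'ezout-type bounds on the length and complexity of bounded-degree algebraic curves, encapsulated in Lemma~\ref{lem:almost-finite-box-dim}.
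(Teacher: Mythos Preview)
Your argument is plausible but takes a considerable detour. The paper's proof is essentially a one-liner: take $h(t)=t^\beta$, recall that $\mu(E)=\Theta_d(\mathcal H^\beta(E))$ for Borel $E\subset A$, and then simply \emph{quote} the bound $\mu(T)\le C\,w(T)^{d-1}$, which was already established in the proof of Theorem~\ref{thm:gauge-main-result} (respectively Theorem~\ref{thm:polynomial} for $\mathcal P_k$-tubes) as an immediate consequence of Theorem~\ref{thm:main-technical-result}. That theorem says $\sup_{n,\ell}Y_n^\ell<\infty$ almost surely, i.e.\ all hyperplane projections of every $\mu_n$ have uniformly bounded density, and Fubini then gives the tube bound for $\mu$. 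No new probabilistic estimate is needed at this stage.

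You instead re-derive the uniform tube bound from scratch, via a per-tube tail estimate plus a net/Borel--Cantelli argument over tubes. This ``tubes first'' route can be made to work, but your sketch conceals exactly the nontrivial point: the level-$n$ cube survivals along $T$ are \emph{not} independent (they are coupled through common ancestors in the hierarchical construction), so a direct Bernstein bound on $\mu(T)$ is not available. What you call ``harmless multi-scale telescoping'' is precisely the martingale/Hoeffding mechanism of Lemma~\ref{lem:large-deviation}, applied level by level; once you unpack it you are essentially reproducing the proof of Theorem~\ref{thm:main-technical-result}, only with $\mathcal H^1(\ell\cap\cdot)$ replaced by $|T\cap\cdot|$. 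One small inaccuracy: Lemma~\ref{lem:almost-finite-box-dim} gives $\#\mathcal N_n\le\exp(O_k(n^2))$, not $2^{O_k(1)n}$; this does not spoil your union bound (the per-tube tail decays like $\exp(-c\,2^{(\beta-d+1)n})$, which dominates), but it should be stated correctly.
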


In \cite[Theorem 1]{Carbery09} this is proved, for standard tubes, with any exponent $\gamma<\min(\beta,d-1)$ in the denominator, and the question of finding all possible pairs $(\beta,\gamma)$ is posed. Examples satisfying this estimate at the other endpoint, corresponding to $\beta=\gamma<d-1$, were constructed by Orponen \cite{Orponen13} (again, only in the case of standard tubes around lines).

Our final result concerns the dimension of intersections of fractals and lines (or, more generally, algebraic curves). It is a general result of Marstrand (in the plane) and Mattila (in arbitrary dimension) that a Borel set $E\subset\mathbb{R}^d$ of Hausdorff dimension $\beta>1$ intersects ``typical'' lines in Hausdorff dimension at most $\beta-1$ (here ``typical'' refers to an appropriate natural measure space, see \cite[Theorem 10.10]{Mattila95}). It is of interest to sharpen this result for specific classes of sets. For example, Furstenberg \cite{Furstenberg70} conjectured that for certain fractals of dynamical origin, there are no exceptional lines, and Manning and Simon \cite{ManningSimon12} proved that typical lines with rational slopes intersect the Sierpi\'{n}ski carpet in dimension strictly less than $\beta-1$, where $\beta$ is the dimension of the carpet. It follows from our methods that there exist sets for which there are no exceptional lines in the Marstrand-Mattila's Theorem, in a strong uniform quantitative way.

\begin{theorem} \label{thm:fibres}
Let $h$ be a gauge function satisfying the assumptions of Theorem \ref{thm:gauge-main-result}. Then there exists a compact set $A\subset\mathbb{R}^d$ of positive and finite $h$-dimensional measure with the following property: there exists $C>0$ such that for any line $\gamma\subset\mathbb{R}^d$ and any $r>0$, the fibre $A\cap \gamma$ can be covered by $C r/h(r)$ intervals of length $r$.

If $d=2$, the same holds for all $\gamma\in\mathcal{P}_k$, provided $h$ satisfies the slightly stronger assumption of Theorem \ref{thm:polynomial}.
\end{theorem}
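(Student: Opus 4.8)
The plan is to extract Theorem~\ref{thm:fibres} from the construction already used to prove Theorem~\ref{thm:gauge-main-result} (and Theorem~\ref{thm:polynomial}), since both rely on the same random fractal measures and their intersection properties with curves. Concretely, one builds a random measure $\mu$ (a Cantor-type measure driven by a spatially homogeneous branching process, akin to fractal percolation) whose natural support $A$ satisfies $0<\mathcal{H}^h(A)<\infty$; the covering bound (1) of Theorem~\ref{thm:gauge-main-result} will already give that $A$ meets balls of radius $2^{-n}$ in at most $C/h(2^{-n})$ of them. The core probabilistic estimate behind the non-tube-null property is a uniform bound of the form $\mu(T)\lesssim w(T)^{d-1}$ (respectively $\mu(T)\lesssim w(T)$ for $\mathcal{P}_k$-tubes in the plane), valid \emph{simultaneously} for all tubes $T$, which holds with positive probability precisely because of the integrability hypothesis \eqref{eq:growth-h} (resp.\ \eqref{eq:growth-h-polynomial}). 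I would use this same event.

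First I would recall the key estimate from the proof of Theorem~\ref{thm:gauge-main-result}: on an event of positive probability, there is a random constant $C$ so that $\mu(T)\le C\, w(T)^{d-1}$ for every linear tube $T$; in the planar polynomial case the analogous bound $\mu(T)\le C\,w(T)$ holds for every $\mathcal{P}_k$-tube, under the stronger gauge assumption. Fix a sample in this event, together with its constant $C$, and let $A=\spt\mu$. Now fix a line (or curve) $\gamma\in\mathcal{P}_k$ and $r>0$. The $r$-neighborhood of $\gamma$ is exactly a $\mathcal{P}_k$-tube $T_\gamma$ of width $r$, so $\mu(T_\gamma)\le C r^{d-1}$ (for $d=2$ this is $Cr$; for general $d$ and linear $\gamma$ it is $Cr^{d-1}$). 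On the other hand, $A\cap\gamma$ is contained in $T_\gamma$, and one covers $A\cap\gamma$ by $O(1/r)$ intervals of length $r$ along $\gamma$; it is enough to bound the number $N_r$ of such intervals that actually meet $A$. For each such interval, its $r$-neighborhood is a ball of radius $\asymp r$ intersecting $A$, and here I would invoke the lower regularity of $\mu$ established in the earlier construction, namely $\mu(B(x,r))\ge c\,h(r)$ for $x\in A$ (this is how $0<\mathcal{H}^h(A)$ is proved). Since boundedly many of these balls have bounded overlap, summing gives $N_r \cdot c\,h(r) \lesssim \mu(T_\gamma') \le C'r^{d-1}$, where $T_\gamma'$ is the slightly enlarged tube of width $O(r)$; hence $N_r \le C'' r^{d-1}/h(r)$, which for $d=2$ is $C''r/h(r)$ as claimed. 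For general $d$ and linear $\gamma$ the statement should read with $r^{d-1}/h(r)$, but since the hypotheses force $h(r)\lesssim r^{d-1}$ the bound $r^{d-1}/h(r)\ge 1$; to match the stated ``$Cr/h(r)$'' I would note that for $d=2$ the two expressions coincide, and for $d\ge 3$ reinterpret $r/h(r)$ as the natural $r^{d-1}/h(r)$, or observe that along a line one only needs $O(1/r)$ candidate intervals and $r^{d-1}/h(r)$ dominates when $h(r)\le r^d$, which \eqref{eq:growth-h} guarantees near $t=0$.

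The passage from ``$\mu$ of every tube is small'' to ``$A$ meets few $r$-intervals along every curve'' needs the bound for tubes to be \emph{uniform over all curves}, not just almost-sure for each fixed curve, and this is exactly where the earlier work's main technical effort lies: one discretizes the (finite- or almost-finite-dimensional, by Lemma~\ref{lem:almost-finite-box-dim}) family $\mathcal{P}_k$ at each scale, controls the number of essentially distinct tubes of width $2^{-n}$, and sums the large-deviation failure probabilities using \eqref{eq:growth-h} (resp.\ \eqref{eq:growth-h-polynomial}) to see the bad event has probability $<1$. I would cite this verbatim rather than redo it. The only genuinely new point is the elementary covering/overlap argument turning the measure estimate into a counting estimate for $A\cap\gamma$, together with the observation that the relevant constant $C$ is the \emph{same} for all lines/curves, yielding the uniform $C$ in the statement.

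The step I expect to be the main obstacle is bookkeeping rather than conceptual: making sure that the lower bound $\mu(B(x,r))\ge c\,h(r)$ holds uniformly for all $x\in A$ and all small $r$ on the same event where the tube upper bounds hold (the two are proved by separate arguments in the construction, so I must intersect the events and keep a single constant), and being careful with the mild discrepancy between the stated $Cr/h(r)$ and the natural $Cr^{d-1}/h(r)$ in dimensions $d\ge 3$. Neither is serious: both bad events have probability bounded away from $1$, so their union still misses a positive-probability set, and the dimension discrepancy is reconciled by the growth condition on $h$.
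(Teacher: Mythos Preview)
Your approach is correct and is essentially the paper's own argument: both compare the uniform tube bound $\mu(\gamma(\delta))=O(\delta^{d-1})$ (from Theorem~\ref{thm:main-technical-result} via Fubini) against the lower mass of each $r$-piece meeting $A$. The only difference is that the paper works at the discrete level with $\mu_n$ and the cubes of $A_n$, where each surviving cube has \emph{exact} mass $P_n^{-1}=\Theta(h(2^{-n}))$; this replaces your appeal to the lower regularity $\mu(B(x,r))\ge c\,h(r)$ and the bounded-overlap step, and sidesteps the need to intersect two events. Your version is fine too, since that lower regularity is immediate from the construction (each $x\in A$ lies in a level-$n$ cube of $\mu$-mass $P_n^{-1}$).

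On the point you flag as the main obstacle: there is no real discrepancy. Your argument yields $N_r\le C'\,r^{d-1}/h(r)$, and for $0<r\le 1$ and $d\ge 2$ one has $r^{d-1}\le r$, so $r^{d-1}/h(r)\le r/h(r)$ and the stated bound follows directly; no reinterpretation or appeal to $h(r)\le r^d$ is needed.
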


The proofs of all our main results rely on a random iterative construction described in the next section. Thus this paper can be seen as an application of the probabilistic method, based on the insight that it is often easier to exhibit objects with certain properties by showing that almost every object in a random family satisfies them. Although here we study the geometry of random measures as a tool towards our results, many recent articles have studied projections of random fractals for their own sake (see \cite{RamsSimon11, PeresRams11, RamsSimon14} and references therein). In particular, the results in \cite{RamsSimon11, PeresRams11} on projections of fractal percolation led us to believe that random tree-like fractals were likely not tube null, and provided several of the ideas needed to prove it.

\subsection*{Acknowledgements}

 We learned some of the ideas we use from \cite{PeresRams11}, and we thank Y. Peres and M. Rams for sharing their insights with us. We are also grateful to M. Cs\"{o}rnyei for telling us about the problems related to non tube null sets.

\section{Notation and construction} \label{sec:notation-and-construction}

We use $O(\cdot)$ notation: $X=O(Y)$ means $X\le C Y$ for some constant $0<C<+\infty$, $X=\Omega(Y)$ means $Y=O(X)$, and $X=\Theta(Y)$ means $X=O(Y)$ and $Y=O(X)$. When the implicit constants depend on some other constant, this will be denoted by subscripts; so for example $Y=O_k(X)$ means that $Y\le C(k)X$ for some positive function $C$ of $k$. Throughout the paper, we let $|A|$ denote the Lebesgue measure of a set $A\subset\R^d$. We also let $D$ denote the Hausdorff metric in the space of compact subsets of the unit cube $[0,1]^d$.

We prove all our results for sets obtained as the limit of an iterative random construction related to (although different from) fractal percolation. A somewhat related random construction was used by Peres and Solomyak \cite{PeresSolomyak05} to obtain sets $A$ such that $\mathcal{H}^h(A)>0$, yet  almost all orthogonal projections of $A$ have zero Lebesgue measure. Such sets are necessarily tube-null, and it is thus not surprising that the results of \cite{PeresSolomyak05} apply to completely different gauge functions than the ones considered in the present work.

We now describe our random construction: Let $\mathcal{D}_n$ denote the collection of closed dyadic sub-cubes of $[0,1]^d$ of side length $2^{-n}$. Let $\{a_n\}$ be a sequence satisfying
\[
a_n\in \{1,2^d\}\quad\text{ and }\quad P_n:=\prod_{i=1}^n a_i =\Theta\left(1/h(2^{-n})\right).
\]
Such sequence exists because $h(t)\le h(2t)\le 2^{d}h(t)$.

Starting with the unit cube $A_0=[0,1]^d$, we inductively construct random sets $A_n$ as follows. If $a_n=2^d$, set $A_{n+1}=A_n$. Otherwise, if $a_n=1$, choose, for each $D\in\mathcal{D}_n$ such that $D\subset A_n$, one of the $2^d$ dyadic sub-cubes of $D$ (which are in $\mathcal{D}_{n+1}$), with all choices being uniform and independent of each other and the previous steps. Let $A_{n+1}$ be the union of the chosen sub-cubes. Then $\{ A_n\}$ is a decreasing sequence of nonempty compact sets, and we set $A=\bigcap_{n=1}^\infty A_n$. See Figure \ref{fig:construction} for an example.

  \begin{figure}
    \centering
    \includegraphics[width=0.9\textwidth]{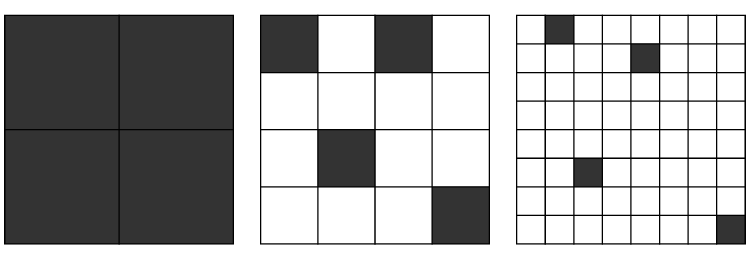}
    \caption{The first three steps in the construction of $A$ in the plane, with $a_1=4, a_2=a_3=1$} \label{fig:construction}
  \end{figure}

\section{Proof of key result} \label{sec:proof-key-lemma}

 Let $\mu_n$
be the normalized restrictions of Lebesgue measure to $A_n$, i.e.
\begin{equation}\label{mun-def}
\mu_n(B)=2^{dn} P_{n}^{-1}|B\cap A_n|
\end{equation}
 for all $B\subset\R^d$. Write $\mathcal{B}_n$ for the ring generated by the dyadic cubes in $\mathcal{D}_n$. It is easy to check that if $m\ge n$, then $\mu_m(B)=\mu_n(B)$ for $B\in\mathcal{D}_n$, so it follows from Carath\'{e}odory's extension theorem that there is a Borel probability measure $\mu$ on $\R^d$ (but supported on $A$) such that $\mu(B)=\mu_n(B)$ for any set $E\in\mathcal{B}_n$, see e.g. \cite[Proposition 1.7]{Falconer03}. In particular, $\int f d\mu_n\to \int f d\mu$ for any function $f$ which is $\mathcal{B}_j$-measurable for some $j$, and from here an approximation argument shows that $\mu_n\to\mu$ weakly. Notice that, even though $A$ is random, this convergence is deterministic for any realization of the sequence $(A_n)$.

 It is standard that $\mu(E)=\Theta_d(\mathcal{H}^h(E))$ for any Borel set $E\subset A$; we give the proof for completeness. Write
 \[
 \overline{\mathcal{H}}^h(E)=\lim_{n\to\infty} \inf\left\{ \sum_{i=1}^\infty h(2^{-k_i}) : E\subset\bigcup_i E_i, E_i\in\mathcal{D}_{k_i}, k_i\ge n \right\}.
 \]
 Since any set of diameter $r\in [2^{-n},2^{1-n})$ can be covered by $O_d(1)$ cubes in $\mathcal{D}_n$, and $\diam(Q)=\Theta_d(2^{-n})$ for $Q\in\mathcal{D}_n$, we have $\overline{\mathcal{H}}^h(E)=\Theta_d(\mathcal{H}^h(E))$ for any set $E$. Since, by construction,
 \[
 \mu(Q)=\mu_n(Q)=\Theta(h(2^{-n}))\quad\text{ if } Q\in\mathcal{D}_n\text{ and }Q\subset A_n,
  \]
 the claim follows from the fact that dyadic cubes generate the Borel $\sigma$-algebra. In particular, $0<\mathcal{H}^h(A)<\infty$ and, for any Borel set $E\subset A$, $\mathcal{H}^h(E)>0$ if and only if $\mu(E)>0$.

We now start the core of the proof of Theorem \ref{thm:gauge-main-result}: showing that almost surely, no positive measure subset of $A$ is tube null.

Let $\mathcal{A}$ denote the family of all lines which intersect the unit cube. Given $\ell\in\mathcal{A}$ and $n\in\mathbb{N}$, we define the random variable
\begin{equation}\label{eq:an}
Y_n^\ell= 2^{dn} P_n^{-1} \mathcal{H}^1(A_n\cap \ell) = \frac{\mathcal{H}^1(A_n\cap \ell)}{|A_n|},
\end{equation}
where $\mathcal{H}^1$ denotes $1$-dimensional Hausdorff measure (length). Our proof will involve estimating the $Y_n^\ell$, and indeed showing that they are uniformly bounded. This is the content of our key result:

\begin{theorem} \label{thm:main-technical-result} Almost surely,
$\sup_{n\in\mathbb{N},\ell\in\mathcal{A}} Y_n^\ell <\infty$.
\end{theorem}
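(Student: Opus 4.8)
The plan is to analyze the random variables $Y_n^\ell$ by exploiting the multiplicative/martingale structure coming from the independent subcube choices. Fix a line $\ell\in\mathcal A$. At a ``splitting'' step (one where $a_n=1$), each surviving cube $D\in\mathcal D_n$ with $D\subset A_n$ is replaced by one of its $2^d$ children chosen uniformly and independently. For a cube $D$ meeting $\ell$, the expected length of $\ell$ inside the chosen child, conditioned on $A_n$, equals $2^{-1}\mathcal{H}^1(\ell\cap D)$ (since exactly the right fraction of subcubes, weighted by the length they carry, averages out): more precisely $\EE[\mathcal{H}^1(\ell\cap A_{n+1})\mid A_n]=\tfrac12\,\mathcal{H}^1(\ell\cap A_n)$, and since $|A_{n+1}|=\tfrac12|A_n|$ on such a step (and $Y$ is unchanged when $a_n=2^d$), the sequence $(Y_n^\ell)_n$ is a nonnegative martingale. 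Hence $Y_n^\ell\to Y_\infty^\ell$ a.s.\ and $\EE Y_n^\ell=1$ for all $n$. This gives boundedness for each fixed $\ell$, but the theorem demands a \emph{uniform} bound over the uncountable family $\mathcal A$, so the real work is a large-deviations estimate plus a covering/continuity argument.

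The key step is to show that $Y_n^\ell$ has exponential (or at least stretched-exponential) upper tails, uniformly in $n$ and $\ell$, with a bound strong enough to beat the number of lines one needs to consider at scale $2^{-n}$. I would estimate the moment generating function $\EE[\exp(\lambda Y_n^\ell)]$ by induction on the splitting steps: writing $Y_{n+1}^\ell$ as a sum over the surviving cubes $D\subset A_n$ of independent contributions $\xi_D$ (the rescaled length picked up in the chosen child of $D$), with $\sum_D \EE[\xi_D\mid A_n]=Y_n^\ell$ and each $\xi_D$ bounded by $O(2^{-n}/P_n)$ times a geometric factor — here one uses that $\ell$ can cross at most $O(1)$ children of a given cube with comparable length, so each $\xi_D$ is bounded by a constant multiple of its conditional mean. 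A Bernstein/Hoeffding-type bound then yields $\EE[\exp(\lambda Y_{n+1}^\ell)\mid A_n]\le \exp(\lambda Y_n^\ell + C\lambda^2 2^{-n}P_n^{-1} Y_n^\ell)$ for $\lambda$ in a suitable range, and iterating (the errors $\sum_n 2^{-n}P_n^{-1}$ are controlled, in fact summable, using $P_n=\Theta(1/h(2^{-n}))$ together with the growth hypothesis on $h$) gives a uniform bound $\EE[\exp(\lambda Y_n^\ell)]\le C_\lambda<\infty$. This is exactly where condition \eqref{eq:growth-h} enters — the $\sqrt{\phantom{x}}$ and the $|\log t|$ are there precisely so that the accumulated variance terms, once one takes a union bound over the $\approx 2^{O(n)}$ lines needed to $2^{-n}$-approximate $\mathcal A$, remain summable in $n$.

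From the tail bound the conclusion follows by a standard chaining/Borel--Cantelli argument: $\mathcal A$ is a bounded subset of a $2$-parameter space (direction on $S^{d-1}$ plus position), so for each $n$ there is a set $\mathcal A_n$ of $O(2^{cn})$ lines such that every $\ell\in\mathcal A$ lies within Hausdorff distance $2^{-n}$ of some $\ell'\in\mathcal A_n$; and crucially, if $\dist(\ell,\ell')\le 2^{-n}$ then $A_n\cap\ell$ is contained in the $C2^{-n}$-neighbourhood of $A_n\cap\ell'$ \emph{within the union of cubes of $\mathcal D_n$ meeting $\ell'$}, so $\mathcal{H}^1(A_n\cap\ell)\le C\,\mathcal{H}^1(A_{n}'\cap \ell')$ for a slightly fattened version, giving $Y_n^\ell\le C\, Y_{n}^{\ell'} + (\text{lower-order})$. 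Choosing $\lambda$ fixed and applying Markov to $\PP(Y_n^{\ell'}>t)\le C_\lambda e^{-\lambda t}$ with $t=t_n\to\infty$ slowly (e.g.\ $t_n = (c/\lambda)\cdot n$) makes $\sum_n |\mathcal A_n|\,\PP(Y_n^{\ell'}>t_n)<\infty$, so by Borel--Cantelli, a.s.\ $\sup_{\ell\in\mathcal A_n,\,n} Y_n^{\ell}/n<\infty$; combined with the continuity estimate and the fact that along the non-splitting steps $Y$ is constant, one upgrades this to the genuine uniform bound $\sup_{n,\ell}Y_n^\ell<\infty$. The main obstacle I anticipate is making the per-step variance estimate clean enough — controlling the contribution $\xi_D$ of each cube by its conditional mean uniformly, and handling the (at most $O(1)$) cubes that $\ell$ nearly grazes at a corner — so that the iteration produces a genuinely summable error; everything else is a fairly routine union bound, but the bookkeeping linking $\lambda$, the growth of $|\mathcal A_n|$, and the hypothesis \eqref{eq:growth-h} must be done with care.
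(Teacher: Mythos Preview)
Your overall architecture (martingale structure, per-step concentration, discretization of $\mathcal{A}$ by a net of size $O(1)^n$, Borel--Cantelli) matches the paper's. But there is a genuine gap at the point where you pass from the exponential-moment bound to the uniform bound.

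First a minor correction: the conditional ``variance scale'' is $2^{(d-1)n}P_n^{-1}$, not $2^{-n}P_n^{-1}$. Indeed, with $X_Q$ as in your setup one has $|X_Q|=O(\nu_n(Q))$ and $\sum_Q\nu_n(Q)^2\le O(1)\sum_Q\nu_n(Q)=O(2^{(1-d)n}P_n Y_n^\ell)$, so the Hoeffding step gives
\[
\EE\bigl[e^{\lambda(Y_{n+1}^\ell-Y_n^\ell)}\,\big|\,A_n\bigr]\le \exp\bigl(C\lambda^2\,\sigma_n^2\, Y_n^\ell\bigr),\qquad \sigma_n^2:=2^{(d-1)n}P_n^{-1}=\Theta\bigl(2^{(d-1)n}h(2^{-n})\bigr).
\]
Under \eqref{eq:growth-h} one does have $\sum_n\sigma_n^2<\infty$, so your iteration to $\EE[e^{\lambda_0 Y_n^\ell}]\le C_{\lambda_0}$ is legitimate.

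The real problem is the next step. A uniform bound $\EE[e^{\lambda_0 Y_n^\ell}]\le C_{\lambda_0}$ together with a union bound over a net $\mathcal{A}_n$ of size $e^{cn}$ forces the threshold $t_n$ in $\PP(Y_n^\ell>t_n)$ to grow like $cn/\lambda_0$, exactly as you say; this yields only $\sup_{\ell}Y_n^\ell=O(n)$ almost surely. Your final sentence asserts that this ``upgrades'' to $\sup_{n,\ell}Y_n^\ell<\infty$ via the continuity estimate and the fact that $Y$ is constant on non-splitting steps, but neither observation helps: there are infinitely many splitting steps, and the continuity estimate compares different lines at the \emph{same} $n$, not different $n$'s. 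From $M_n:=\sup_\ell Y_n^\ell=O(n)$ alone there is no way to conclude $M_n=O(1)$.

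The paper closes this gap by never iterating the MGF. It uses your per-step Hoeffding bound \emph{directly} on the increment: optimizing in $\lambda$ gives
\[
\PP\bigl(|Y_{n+1}^\ell-Y_n^\ell|>\kappa\sqrt{Y_n^\ell}\bigr)\le O(1)\exp\bigl(-\Omega(\kappa^2/\sigma_n^2)\bigr).
\]
Choosing $\kappa=\kappa_n:=C\sqrt{n}\,\sigma_n$ makes the right-hand side $\exp(-\Omega(C^2 n))$, which for large $C$ beats the net size $O(1)^n$. Together with the discretization lemma this gives, almost surely for all large $n$,
\[
M_{n+1}\le M_n + C\sqrt{n}\,\sigma_n\sqrt{M_n}+O(2^{-n}).
\]
Now the hypothesis \eqref{eq:growth-h} is exactly $\sum_n\sqrt{n}\,\sigma_n<\infty$, and this recursion forces $\sup_n M_n<\infty$. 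In short: you should apply the concentration to the \emph{increments} $M_{n+1}-M_n$ and sum, rather than to $Y_n^\ell$ itself; the summability of $\sqrt{n}\,\sigma_n$ (not merely of $\sigma_n^2$) is precisely what is needed, and this is where the square root and the logarithm in \eqref{eq:growth-h} actually enter.
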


Theorem \ref{thm:main-technical-result}  will follow from the next two lemmas. The first is a large deviation argument that we adapt from \cite{PeresRams11}. Since we want $Y_{n}^\ell$ to be a martingale, we only consider non-dyadic lines, e.g. lines $\ell\in\mathcal{A}$ not contained in any dyadic hyperplane, $\{(x_1,\ldots, x_d)\in\R^d\mid x_j=k 2^{-n}\}$, $k\in\mathbb{Z}$, $n\in\N$. We denote the family of non-dyadic lines by $\mathcal{A}'$. Observe that $\sup_{n\in\mathbb{N},\ell\in\mathcal{A}} Y_n^\ell= O_d\left(\sup_{n\in\mathbb{N},\ell\in\mathcal{A}'} Y_n^\ell\right)$.

\begin{lemma} \label{lem:large-deviation}
For any $\ell\in\mathcal{A}'$, $n\in \mathbb{N}$, and $\kappa>0$ for which
\begin{equation}\label{assu:kappa}
\kappa^2 2^{(1-d)n}P_n=\Omega(1)\,,
\end{equation}
we have
\[
\mathbb{P}\left(\left|Y_{n+1}^\ell-Y_n^\ell \right|> \kappa\sqrt{Y_n^\ell}\right) \le  O(1) \exp(-\Omega(1)\kappa^2 2^{(1-d)n}P_n).
\]
\end{lemma}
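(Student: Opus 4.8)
The plan is to fix $\ell\in\mathcal{A}'$ and $n\in\mathbb{N}$ and study the conditional increment $Y_{n+1}^\ell - Y_n^\ell$ given $A_n$. If $a_n = 2^d$ then $A_{n+1}=A_n$ and the increment is zero, so we may assume $a_n=1$. Write $A_n$ as a union of $N = 2^{dn}P_n^{-1}$ dyadic cubes of side $2^{-n}$; only those cubes $D$ meeting $\ell$ contribute. For each such cube $D$, let $Z_D = 2^{dn}P_n^{-1} \cdot \mathcal{H}^1(A_{n+1}\cap D\cap \ell)$ be its (rescaled) contribution to $Y_{n+1}^\ell$, and note $Y_{n+1}^\ell = \sum_D Z_D$ with the $Z_D$ independent across $D$. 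Since $\ell$ is non-dyadic it passes through the relative interior of at most a bounded number $O_d(1)$ of the $2^d$ children of $D$, hitting each child in length $\mathcal{H}^1(D\cap\ell)/2$; the child chosen for $A_{n+1}$ is uniform, so $\mathbb{E}[Z_D\mid A_n] = 2^{dn}P_n^{-1}\cdot \mathcal{H}^1(D\cap\ell)/2 \cdot (2^d/2^d)$ — one checks this equals the contribution of $D$ to $Y_n^\ell$, giving $\mathbb{E}[Y_{n+1}^\ell\mid A_n] = Y_n^\ell$, so $(Y_n^\ell)$ is a martingale.

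**Controlling the range and variance of each $Z_D$.** Each $Z_D$ is bounded: $0 \le Z_D \le 2^{dn}P_n^{-1}\cdot \mathcal{H}^1(D\cap\ell) \le 2^{dn}P_n^{-1}\cdot \sqrt{d}\, 2^{-n} = O_d(2^{(d-1)n}P_n^{-1})$. Call this bound $M$. Also $\mathbb{E}[Z_D^2\mid A_n] \le M\cdot\mathbb{E}[Z_D\mid A_n]$, so summing over the contributing cubes, $\mathrm{Var}(Y_{n+1}^\ell\mid A_n) = \sum_D \mathrm{Var}(Z_D\mid A_n) \le M\sum_D \mathbb{E}[Z_D\mid A_n] = M\, Y_n^\ell = O_d(2^{(d-1)n}P_n^{-1})\, Y_n^\ell$.

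**Bernstein/Azuma-type concentration.** Now apply a Bernstein-type inequality (or Freedman's inequality) to the sum $Y_{n+1}^\ell - Y_n^\ell = \sum_D (Z_D - \mathbb{E}[Z_D\mid A_n])$ of independent, mean-zero, $[-M,M]$-bounded terms with total conditional variance $V \le M Y_n^\ell$: for any $\lambda>0$,
\[
\mathbb{P}\!\left(\left|Y_{n+1}^\ell - Y_n^\ell\right| > \lambda \;\Big|\; A_n\right) \le 2\exp\!\left(-\Omega(1)\min\!\left(\frac{\lambda^2}{V},\,\frac{\lambda}{M}\right)\right).
\]
Take $\lambda = \kappa\sqrt{Y_n^\ell}$. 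Then $\lambda^2/V \ge \kappa^2 Y_n^\ell/(M Y_n^\ell) = \kappa^2/M = \Omega_d(\kappa^2 2^{(1-d)n}P_n)$, and $\lambda/M = \kappa\sqrt{Y_n^\ell}/M$. Using the hypothesis \eqref{assu:kappa} that $\kappa^2 2^{(1-d)n}P_n = \Omega(1)$, i.e. $\kappa/M = \Omega_d(\kappa^2 2^{(1-d)n}P_n/\sqrt{Y_n^\ell}\cdot\sqrt{Y_n^\ell}\cdot(\ldots))$ — more directly, since $Y_n^\ell$ is bounded below by a constant times $\mathcal{H}^1(D\cap\ell)$ on the event that it is nonzero, $\kappa/M \ge \Omega_d(\kappa^2 2^{(1-d)n}P_n)$ as well (and if $Y_n^\ell = 0$ the bound is trivial). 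Hence the minimum is $\Omega_d(\kappa^2 2^{(1-d)n}P_n)$, and taking expectation over $A_n$ yields the claimed bound, the $O(1)$ absorbing the factor $2$ and $d$-dependence.

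**Main obstacle.** The delicate point is bounding $\lambda/M$ below by $\Omega(\kappa^2 2^{(1-d)n}P_n)$ — equivalently handling the regime where $Y_n^\ell$ is small but positive. One must use that if $\ell$ meets $A_n$ at all, then $Y_n^\ell \ge c_d\, 2^{(d-1)n}P_n^{-1} = c_d M$ (a single surviving cube crossed by $\ell$ contributes this much), so $\sqrt{Y_n^\ell}/M \ge \sqrt{c_d/M} = \Omega_d(\sqrt{2^{(1-d)n}P_n})$, and then condition \eqref{assu:kappa} gives $\kappa\sqrt{Y_n^\ell}/M = \Omega_d(\kappa\sqrt{2^{(1-d)n}P_n}) \ge \Omega_d(\kappa^2 2^{(1-d)n}P_n)$ precisely because $\kappa^2 2^{(1-d)n}P_n = \Omega(1)$ forces $\kappa\sqrt{2^{(1-d)n}P_n} \ge \kappa^2 2^{(1-d)n}P_n$ up to constants. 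Assembling this lower bound carefully — and verifying the independence structure and the exact martingale normalization through the steps where $a_n = 2^d$ (which contribute nothing) — is the only real work; the rest is a black-box concentration inequality.
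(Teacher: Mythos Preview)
Your overall strategy---apply Bernstein's inequality directly to the sum of independent increments, using the variance bound $V\le M\,Y_n^\ell$---is different from the paper's route and is a perfectly good one. The paper instead groups the cubes $Q$ according to the dyadic size of $\mathcal{H}^1(\ell\cap Q)$ and applies Hoeffding separately on each group; this avoids the linear Bernstein term altogether, at the cost of an extra layer of summation. Your approach is more direct but forces you to deal with the linear regime $\lambda/M$, and that is exactly where your argument breaks.

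The claimed lower bound ``if $\ell$ meets $A_n$ at all, then $Y_n^\ell\ge c_d M$'' is false: a line can clip a single surviving cube in an arbitrarily short segment, so $Y_n^\ell$ can be any positive number, no matter how small. Your subsequent chain of inequalities is also wrong on its own terms: even granting $\sqrt{Y_n^\ell}/M\ge\Omega(\sqrt{2^{(1-d)n}P_n})$, the step ``$\kappa\sqrt{2^{(1-d)n}P_n}\ge\kappa^2 2^{(1-d)n}P_n$ because the right-hand side is $\Omega(1)$'' goes the wrong way---$\Omega(1)$ says the right-hand side is \emph{large}, which makes $x\ge x^2$ fail, not hold. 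In the intended application $\kappa^2 2^{(1-d)n}P_n=\Theta(n)\to\infty$.

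The fix is short. Since $A_{n+1}\subset A_n$ and $P_{n+1}=P_n$ when $a_n=1$, one has the deterministic bound $Y_{n+1}^\ell\le 2^d Y_n^\ell$, hence $|Y_{n+1}^\ell-Y_n^\ell|\le (2^d-1)Y_n^\ell$. If $Y_n^\ell\le (2^d-1)^{-2}\kappa^2$ this already gives $|Y_{n+1}^\ell-Y_n^\ell|\le\kappa\sqrt{Y_n^\ell}$, so the event is empty. Otherwise $\sqrt{Y_n^\ell}\ge\Omega_d(\kappa)$, whence $\lambda/M=\kappa\sqrt{Y_n^\ell}/M\ge\Omega_d(\kappa^2/M)=\Omega_d(\kappa^2 2^{(1-d)n}P_n)$, and Bernstein gives the desired bound. (Two smaller slips worth cleaning up: with $a_n=1$ one has $Y_{n+1}^\ell=2^d\sum_D Z_D$, not $\sum_D Z_D$, under your normalization; and the line does \emph{not} hit each child in equal length $\mathcal{H}^1(D\cap\ell)/2$---the martingale identity holds simply because the children are chosen uniformly and, for $\ell$ non-dyadic, their intersections with $\ell$ partition $\ell\cap D$ up to a null set.)
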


\begin{proof}
Fix $n$. If $a_n=2^d$ then $Y_{n+1}^\ell=Y_n^\ell$, so we assume $a_n=1$, whence $P_{n+1}=P_n$. Write $D$ for the collection of cubes in $\mathcal{D}_n$ forming $A_n$ that intersect $\ell$ in a set of positive length. In the following we condition on $D$. Let $\nu_j= 2^{n}\mathcal{H}^1|_{\ell\cap A_j}$, $j\in\{n,n+1\}$. For each $Q\in D$, we let
\[
X_Q = 2^{d}\nu_{n+1}(Q)-\nu_n(Q).
\]
Since we are conditioning on $D$, the random variables $\{ X_Q:Q\in D\}$ are independent, have zero mean,
 and are bounded in modulus by $O(1)$. For each $j$, we decompose $D$ into the families
\[
D_j = \{ Q\in D: \sqrt{d}\cdot 2^{-j}\le \nu_n(Q)< \sqrt{d} \cdot 2^{1-j} \}.
\]
Then $D_j$ is empty for all $j< 0$.  Moreover, as $Y_n^\ell = 2^{(d-1)n} P_n^{-1}\sum_{Q\in D}\nu_n(Q)$, we have
\[Y_{n}^\ell\ge \Omega(1)(\#D_j) 2^{(d-1)n-j}P_n^{-1}\,\,,\]
for all $j$.

By Hoeffding's inequality, for any $\lambda>0$ we have the estimate
\[
\PP\left(\left|\sum_{Q\in D_j} X_Q\right|>\lambda 2^{(1-d)n} P_n \sqrt{Y_n^\ell} \right) \le O(1)\exp(-\Omega(\lambda^2)2^{j+(1-d)n}P_n)\,,
\]
recall $|X_Q|=O(2^{-j})$.
Since $Y_{n+1}^\ell-Y_n^\ell=2^{(d-1)n} P_n^{-1}\sum_{Q\in D} X_Q$, we conclude that
\begin{align*}
\PP\left(\left|Y_{n+1}^\ell-Y_n^\ell\right|>\kappa\sqrt{Y_n^\ell} \right) &\le \sum_{j=1}^\infty \PP\left(\left|\sum_{Q\in D_j} X_Q\right|>\Omega(j^{-2}) 2^{(1-d)n} P_n \kappa \sqrt{Y_n^\ell}\right) \\
&\le O(1)\exp(-\Omega(1) 2^{(1-d)n} P_n \kappa^2),
\end{align*}
where we use \eqref{assu:kappa} to obtain the last estimate.
\end{proof}

The second lemma shows that a finite set of lines of exponential size controls all lines, up to an ultimately negligible error.

\begin{lemma}\label{lem:many-lines-to-all-lines}
For each $n$, there is a (deterministic) family of lines $\mathcal{A}_n\subset\mathcal{A}'$ such that $\#\mathcal{A}_n \le O(1)^n$, and
\begin{equation*}
\sup_{\ell\in \mathcal{A}'}Y^{\ell}_n\le\sup_{\ell\in \mathcal{A}_n}Y^{\ell}_n +  O(2^{-n}),
\end{equation*}
for any realization of $A$.
\end{lemma}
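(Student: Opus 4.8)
The plan is to discretise the space of lines at scale $2^{-n}$ (roughly $n$ dyadic levels) and to observe that $Y_n^\ell$ depends on $\ell$ only through the length of intersection of $\ell$ with the finitely many cubes of $\mathcal{D}_n$ comprising $A_n$, so two lines that are $2^{-Cn}$-close must have nearly the same $Y_n$-value. First I would fix $n$ and note that $A_n$ is a union of at most $2^{dn}$ cubes in $\mathcal{D}_n$, each of side $2^{-n}$; the random variable $Y_n^\ell = 2^{(d-1)n}P_n^{-1}\sum_{Q\subset A_n}\mathcal{H}^1(\ell\cap Q)$ (sum over $Q\in\mathcal{D}_n$) is, for each fixed configuration $A_n$, a function of the line $\ell$ alone. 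The key geometric estimate is that $\mathcal{H}^1(\ell\cap Q)$ varies by $O(2^{-n})$ (in fact by $O(\diam Q)=O(2^{-n})$) when $\ell$ is perturbed by much less than $2^{-n}$ in an appropriate metric on lines — say the metric given by the Hausdorff distance between $\ell\cap[0,1]^d$ and $\ell'\cap[0,1]^d$, or equivalently by the parameters (direction plus a base point). Since a line meeting $[0,1]^d$ crosses only $O(1)$ cubes of $\mathcal{D}_n$ along any coordinate direction — more precisely $O(n)$ cubes in total, but only $O(1)$ in the relevant transversal sense — one gets $|Y_n^\ell - Y_n^{\ell'}| = 2^{(d-1)n}P_n^{-1}\cdot O(2^{dn})\cdot O(2^{-n})\cdot(\text{distance}/2^{-n})$, which I want to be $O(2^{-n})$; this forces the net to have spacing $\delta_n = 2^{-Kn}$ for a suitable constant $K=K(d)$.

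Next I would build the net: the family $\mathcal{A}$ of lines meeting $[0,1]^d$ is parametrised by a compact subset of $\R^{2(d-1)}$ (direction on the sphere, intercept on a transversal hyperplane), so a $\delta_n$-net in this parameter space has cardinality $O(\delta_n^{-2(d-1)}) = O(2^{Kn})^{2(d-1)} = O(1)^n$, as required. A mild annoyance is that we need $\mathcal{A}_n\subset\mathcal{A}'$, i.e. the net lines must be non-dyadic; this is harmless since the non-dyadic lines are dense (dyadic hyperplanes form a measure-zero, nowhere dense set in parameter space), so each net point can be nudged by an arbitrarily small amount to become non-dyadic without affecting the cardinality bound or the approximation estimate. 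Then for arbitrary $\ell\in\mathcal{A}'$ I pick the nearest $\ell'\in\mathcal{A}_n$ and the approximation estimate gives $Y_n^\ell \le Y_n^{\ell'} + O(2^{-n}) \le \sup_{\ell\in\mathcal{A}_n}Y_n^\ell + O(2^{-n})$, uniformly over the (deterministic, though the proof works for every realisation) configuration $A_n$.

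The main obstacle is making the continuity estimate $|Y_n^\ell - Y_n^{\ell'}| = O(2^{-n})$ genuinely uniform: one has to be careful that although a line can pass through $\Theta(n)$ cubes of $\mathcal{D}_n$ (not $O(1)$), the function $\ell\mapsto\mathcal{H}^1(\ell\cap A_n)$ is globally Lipschitz with a controlled constant in $\ell$, because $A_n$ has Lebesgue measure $P_n 2^{-dn}$ and lies in $[0,1]^d$. Indeed it is cleaner to bound $|\mathcal{H}^1(\ell\cap A_n)-\mathcal{H}^1(\ell'\cap A_n)|$ directly: slide $\ell$ to $\ell'$ and the symmetric difference $(\ell\cap A_n)\triangle(\ell'\cap A_n)$ is contained in an $O(\text{dist}(\ell,\ell'))$-neighbourhood argument combined with the fact that $\ell$ has length $O(\sqrt d)$ in the cube, giving $|\mathcal{H}^1(\ell\cap A_n)-\mathcal{H}^1(\ell'\cap A_n)| = O(\text{dist}(\ell,\ell'))$ with an absolute constant. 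Multiplying by the normalisation $2^{(d-1)n}P_n^{-1}\le 2^{(d-1)n}$ and choosing $\text{dist}(\ell,\ell') \le 2^{-dn}$ then yields the desired $O(2^{-n})$, and a $2^{-dn}$-net has size $O(2^{dn})^{2(d-1)}=O(1)^n$. Once this Lipschitz bound is pinned down with explicit constants depending only on $d$, the rest of the argument is bookkeeping.
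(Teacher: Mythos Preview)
Your overall strategy---build an exponential-size net in the space of lines and show that $Y_n^\ell$ is almost constant on net cells---is the right one, and it is also the paper's. The gap is in the continuity estimate you propose at the end. The claim
\[
|\mathcal{H}^1(\ell\cap A_n)-\mathcal{H}^1(\ell'\cap A_n)| = O(\dist(\ell,\ell'))
\]
with an absolute constant is \emph{false}. Take $d=2$ and any realisation in which the row $\{k\,2^{-n}\le y\le (k+1)2^{-n}\}$ contains many cubes of $A_n$ while the adjacent row contains few (this occurs with positive probability). For the horizontal lines $\ell=\{y=(k+1)2^{-n}-\varepsilon\}$ and $\ell'=\{y=(k+1)2^{-n}+\varepsilon\}$ one has $\dist(\ell,\ell')=2\varepsilon$ arbitrarily small, yet $|\mathcal{H}^1(\ell\cap A_n)-\mathcal{H}^1(\ell'\cap A_n)|$ is of order the total length of that row in $A_n$, hence $\Theta(1)$ in general. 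After multiplying by $2^{(d-1)n}P_n^{-1}=\Theta(1)$ this gives $|Y_n^\ell-Y_n^{\ell'}|=\Theta(1)$, not $O(2^{-n})$. The ``slide $\ell$ to $\ell'$'' heuristic fails precisely because $A_n$ has axis-parallel boundary: as a line sweeps across a dyadic hyperplane it can instantaneously gain or lose an entire row of intersection.

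Your first attempt (bounding $|\mathcal{H}^1(\ell\cap Q)-\mathcal{H}^1(\ell'\cap Q)|$ cube by cube by $O(\diam Q)$) is of course true but, as you implicitly note, sums to $O(1)$ over the $\Theta(2^n)$ cubes hit, which is too weak. What is needed is a per-cube bound of $O(8^{-n})$, and this is exactly what breaks down for lines nearly parallel to a coordinate hyperplane. The paper's fix is to split the net into two pieces: a coarse $64^{-n}$-dense family $\mathcal{A}_n^0$ that handles all lines making angle at least $8^{-n}$ with every coordinate direction (for such lines the per-cube estimate $\mathcal{H}^1(\ell\cap Q)\le\mathcal{H}^1(\ell'\cap Q)+O(8^{-n})$ does hold by elementary geometry, since entry and exit faces are stable under the perturbation), together with special families $\mathcal{A}_n^1,\ldots,\mathcal{A}_n^d$ of lines that are themselves almost parallel to each dyadic hyperplane, chosen so that a given nearly-parallel $\ell$ is dominated cube-by-cube by some $\ell'$ lying on the \emph{same side} of the relevant hyperplane. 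This second family is the missing idea in your proposal; without it no choice of net spacing $\delta_n$, however small, will give the required uniform bound.
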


\begin{proof}
We construct a family of lines $\AA_n$ with $O(1)^n$ elements such that
given any line $\ell\in\AA'$, there is $\ell'\in\AA_n$ such that
\begin{equation}\label{gg'}
\mathcal{H}^1(\ell \cap Q)\le \mathcal{H}^1(\ell' \cap Q)+O(8^{-n})\quad\text{ for any } Q\in\mathcal{D}_n.
\end{equation}

We first assume that $d=2$.
Recall that $D$ is the Hausdorff distance between closed subsets of the unit cube.
By elementary geometry, there is $\AA^0_{n}\subset\AA'$ with $O(1)^n$ elements which is $(64^{-n})$-dense in the $D$ metric. That is, for every $\ell'\in\AA'$ there is $\ell\in\AA^{0}_n$ with $D(\ell,\ell')<64^{-n}$. For each horizontal dyadic line
\[
\ell_k=\{(x,y)\in\R^2\mid y=k 2^{-n}\},\quad k=0,1,\ldots,2^n,
\]
let $\AA_{\ell_k}$ denote the collection of lines forming an angle $\pm 8^{-n}$ with $\ell_k$ and crossing $\ell_k$ at any of the points $(m 8^{-n}, k 2^{-n})$, $m=0,1,\ldots,8^{n}$. Let $\AA^1_{n}$ be the union of all the $\AA_{\ell_k}$, $k=0,\ldots,2^n$. Observe that $\AA^1_{n}$ has only $O(1)^n$ elements. Finally, let $\AA^2_{n}$ be a similar family of lines constructed around the vertical dyadic lines $\{(x,y)\in\R^2\mid x=k 2^{-n}\}$ and define $\AA_n=\AA^0_{n}\cup\AA^1_{n}\cup\AA^2_{n}$.

Now let $\ell\in\AA'$.
If the angle between $\ell$ and the coordinate directions is larger than $8^{-n}$ or if $\ell$ is completely contained in a single row or column in $\mathcal{D}_n$,
then by elementary geometry, \eqref{gg'} holds for any $\ell'\in\AA^0_n$ which is $64^{-n}$ close to $\ell$ in the $D$ metric. Otherwise, we can find a line $\ell'$ from $\AA^1_{n}$ or $\AA^2_{n}$ such that \eqref{gg'} is fulfilled.

When $d>2$, we also let $\AA^0_{n}$ be a $(64^{-n})$-dense family in $\AA'$ with $O(1)^n$ elements. As in the $d=2$ case, for any $\ell\in\AA'$ which forms an angle at least $8^{-n}$ with all the coordinate hyperplanes, there is $\ell'\in\AA^1_{n}$ so that \eqref{gg'} is satisfied. To deal with the lines forming a small angle with at least one hyperplane
\[H=\{(x_1,\ldots,x_d)\in\R^d\mid x_j=k 2^{-n}\}\,,\]
which they intersect, we need to construct families $\AA^1_{n},\ldots, \AA^d_{n}$. This is done in a similar way to the $d=2$ case, by considering a dense enough subset $Y_H\subset H$ with $O(1)^n$ elements and choosing, for each $z\in Y_H$, suitable lines that cross $H$ at $z$ and are almost parallel to $H$. We omit the details.

We can now finish the proof of the lemma. Since each line $\ell$ hits at most $O(2^n)$ squares in $\mathcal{D}_n$, we conclude from \eqref{gg'} that $\mathcal{H}^1(\ell\cap A_n)\le\mathcal{H}^1(\ell'\cap A_n)+O(4^{-n})$ and combined with the definition of $Y_n^\ell$, this implies that
$Y_n^\ell\le Y_n^{\ell'}+2^{dn} P_n^{-1}O(4^{-n})=Y_n^{\ell'}+O(2^{-n})$. Recall that $P_n=\Omega(2^{(d-1)n})$. It follows that $\AA_n$ is the desired family.
\end{proof}

\begin{proof}[Proof of Theorem \ref{thm:main-technical-result}]
Let $M_n = \sup_{\ell\in\AA'} Y_n^\ell$. It follows from \eqref{eq:growth-h} that
\begin{equation} \label{eq:bound-sum}
\sum_{n=1}^\infty\sqrt{n 2^{(d-1)n} h(2^{-n})}<\infty.
\end{equation}
We claim that it is enough to find $C<\infty$ such that
\begin{equation} \label{eq:borel-cantelli}
\sum_{n=1}^\infty \PP\left(M_{n+1}-M_n>C \sqrt{n 2^{(d-1)n} h(2^{-n}) M_n}+ O(2^{-n})\right)
<\infty.
\end{equation}
Indeed, if this is true then, by the Borel-Cantelli lemma there is $n_0$ such that
\[
M_{n+1} \le M_n+ C \sqrt{n 2^{(d-1)n} h(2^{-n}) M_n}+ O(2^{-n})
\]
for all $n\ge n_0$. Hence, $M_n\le \overline{M}_n$ for all $n\ge n_0$, where $\overline{M}_{n_0}=M_{n_0}$ and
\[
\overline{M}_{n+1} = \overline{M}_n+ C \sqrt{n 2^{(d-1)n} h(2^{-n}) \overline{M}_n}+ O(2^{-n}).
\]
Dividing through by $\sqrt{\overline{M}_n}$ and using that $\overline{M}_n$ is increasing, we get
\[
\sqrt{\overline{M}_{n+1}} \le \sqrt{\overline{M}_n} +  C \sqrt{n 2^{(d-1)n} h(2^{-n})} + O(2^{-n})/\sqrt{M_{n_0}}.
\]
In light of \eqref{eq:bound-sum}, $\sqrt{\overline{M}_n}$ is uniformly bounded, and hence so is $M_n$, giving the claim.

Hence the task is to verify \eqref{eq:borel-cantelli}, and it is enough to do so if we fix $n$ and condition on $A_n$ (so long as the constant $C$ is independent of $A_n$). Pick $\ell\in\AA$. Recalling that $P_n^{-1}=\Theta(h(2^{-n}))$, it follows from
Lemma \ref{lem:large-deviation} that
\[
\PP\left(Y_{n+1}^\ell - Y_n^\ell > C \sqrt{n 2^{(d-1)n} h(2^{-n}) Y_n^\ell}\right)  \le O(1) \exp(-C^2\,\Omega(n)).
\]
Observe that $n 2^{(d-1)n} h(2^{-n}) 2^{(1-d)n} P_n=\Omega(n)=\Omega(1)$ so that \eqref{assu:kappa} holds and we may apply Lemma \ref{lem:large-deviation}.

Let $\AA_{n+1}$ be the family given by Lemma \ref{lem:many-lines-to-all-lines}. For $C$ sufficiently large, it holds that
\begin{align}\label{eq:poly-times-exp}
\PP\left(\max_{\ell\in \AA_{n+1}} Y_{n+1}^\ell - M_n \ge C \sqrt{n 2^{(d-1)n} h(2^{-n}) M_n} \right) &\le O(1)^n \exp(-C^2\Omega(n))\\
&=O(\exp(-\Omega(n))).\notag
\end{align}
In light of Lemma \ref{lem:many-lines-to-all-lines}, we see that \eqref{eq:borel-cantelli} holds, completing the proof.
\end{proof}

\section{Proofs of Theorems \ref{thm:non-tube-null} and \ref{thm:gauge-main-result} } \label{sec:proofs}

\begin{proof}[Proof of Theorem \ref{thm:gauge-main-result}]
The first two assertions in Theorem \ref{thm:gauge-main-result} are clear; we only need to show that if $B\subset A$ has positive $\mu$-measure, then $B$ is not tube null. By Theorem \ref{thm:main-technical-result}, almost surely there is $C>0$ such that $Y_n^\ell\le C$ for all $n$ and $\ell$. Let $\pi_H$ denote the orthogonal projection onto a hyperplane $H\subset\R^d$ and $\mu_n^H(B)=\mu_n(\pi_H^{-1}(B))$ for all $B\subset H$. Using Fubini's theorem, $\mu_n^H$ has density 
\[\lim_{r\downarrow 0}\frac{\mu_n^H(B(x,r))}{|B(x,r)|}\le Y_n^{\ell(H,x)}\,,\]
where $\ell(H,x)$ is the line passing through $x\in H$ orthogonal to $H$ and $|B(x,r)|$ denotes the $(d-1)$-dimensional Lebesgue measure of the ball $B(x,r)\subset H$. This implies that for each $n$, all orthogonal projections of $\mu_n$ onto hyperplanes have a density (w.r.t $(d-1)$-dimensional Lebesgue measure) uniformly bounded by $C$. The same therefore holds for $\mu$.

Now this implies that if $\{ T_j\}$ is a countable collection of tubes covering $B$, then
\[
0<\mu(B) \le \sum_j \mu(T_j) \le \sum_j C\,w(T_j)^{d-1},
\]
showing that $B$ is not tube null.
\end{proof}

\begin{proof}[Proof of Theorem \ref{thm:non-tube-null}]
Take $h(t)=t^{d-1}|\log t\log|\log t||^{-3}$. Because $\lim_{t\downarrow 0}\log h(t)/\log t=d-1$, $A$ has Hausdorff and box dimension equal to $d-1$ and is a.s. not tube null by Theorem \ref{thm:gauge-main-result}.

It remains to show that $A$ is purely unrectifiable. For simplicity, we assume that $d=2$. Denote by $\mathcal{N}$ the collection of all $n\in\N$ for which $a_n=a_{n+1}=1$. Observe that $a_n$ in \eqref{eq:an} can be selected so that $\mathcal{N}$ is infinite.
Suppose on the contrary, that there is a continuously differentiable curve $\Gamma$ such that $\Gamma\cap A$ has positive length. By the Lebesgue density theorem, $\mathcal{H}^1$-almost all $x\in A\cap\Gamma$ satisfy
\begin{align}\label{eq:densitypoint}
\lim_{n\rightarrow\infty}\frac{\mathcal{H}^1\left(3 Q_n\cap \Gamma\cap A\right)}{\mathcal{H}^1\left(3 Q_n\cap \Gamma\right)}=1,\quad\
\limsup_{n\rightarrow\infty}\frac{\mathcal{H}^1\left(3 Q_n\cap \Gamma\right)}{2^{-n}}= O(1),
\end{align}
where $Q_n\in\mathcal{D}_n$ is a square that contains $x$ and $3Q_n$ is the union of $Q_n$ and its neighbors in $\mathcal{D}_n$. Fix $x\in A\cap\Gamma$ satisfying \eqref{eq:densitypoint} and let $n\in\mathcal{N}$. Since each of the neighboring squares of $D_n$ contain only at most one square from $\mathcal{D}_{n+2}$, $\Gamma\cap 3Q_n$ has to cross at least one column or row $S$ of squares in $\mathcal{D}_{n+2}$ such that $3Q_n\cap A\cap S=\emptyset$. This implies that $\mathcal{H}^1(3Q_n\cap\Gamma\setminus A)\ge\Omega(2^{-n})$. For large $n$ this yields a contradiction with \eqref{eq:densitypoint}.

The case $d>2$ follows with the same argument, assuming that $a_n=a_{n+1}=\ldots=a_{n+O_d(1)}=1$ for infinitely many $n$.
\end{proof}

\section{Proof of Theorems \ref{thm:polynomial}, \ref{thm:bounded-density-projections} and \ref{thm:fibres}}

The proof of Theorem \ref{thm:polynomial} follows the same pattern of the proof of Theorem \ref{thm:gauge-main-result}; the main difference lies in establishing the analog of Lemma \ref{lem:many-lines-to-all-lines}, which requires a more involved argument. From now on, we assume that $d=2$. Fix $k\in\mathbb{N}$. As in the linear case, for any curve $\gamma$ and $n\in\mathbb{N}$, we define the random variable
\begin{equation}\label{Yn-def}
Y_n^\gamma= 4^n P_n^{-1} \mathcal{H}^1(A_n\cap \gamma).
\end{equation}
Lemma \ref{lem:large-deviation} continues to hold for $\gamma\in\mathcal{P}_k$ with the same proof, unless $\gamma$ is a dyadic line of the form $\{x=k 2^{-n}\}$ or $\{y=k 2^{-n}\}$, $k,n\in\N$. Indeed, in Lemma \ref{lem:large-deviation}, the only time we used the fact that we were dealing with lines was in the estimate $\mathcal{H}^1(\ell\cap Q)=O(\diam(Q))$ for all squares $Q$, and it is clear that $\mathcal{H}^1(\gamma\cap Q)=O_k(\diam(Q))$ if $\gamma\in\mathcal{P}_k$.

Before discussing the needed analog of Lemma \ref{lem:many-lines-to-all-lines}, we make a reduction that will be useful also later. The following lemma should be well known, but we include a proof for completeness.
\begin{lemma} \label{lem:bezout}
Each $\gamma\in\mathcal{P}_k$ can covered by $O_k(1)$ curves, each of which is, after a rotation by $\pi/2$ and/or a reflection, the graph of a convex, increasing function  $f\colon [a,b]\rightarrow[0,1]$ with derivative bounded by $1$.
\end{lemma}
\begin{proof}
We may assume that $\gamma=P^{-1}(0)$ where $P$ is irreducible (otherwise, apply the argument to its $O_k(1)$ irreducible factors). We also assume, as we may, that $\gamma$ does not contain (and therefore is not) a line. In particular, this implies that the partial derivatives $P_x$, $P_y$ are not identically zero. By Bezout's Theorem, the set
\[
S=P^{-1}(0)\cap \big(P_x^{-1}(0)\cup P_y^{-1}(0)\big)
\]
has cardinality $O_k(1)$. It is well known that $\gamma$ has $O_k(1)$ connected components, see e.g. \cite[Theorem 4.6]{Coste02}. It follows that $\gamma\setminus S$ can be partitioned into $O_k(1)$ curves which are graphs of functions of either $x$ or $y$, without critical points. If $(x,y)$ is in the graph of one such function, say $y=f(x)$, then implicit differentiation gives $f'(x)=-P_x(x,y)/P_y(x,y)$, and one more implicit differentiation yields
\[
f''(x) = -\frac{(f'(x))^2 P_{y y}(x,y)+ 2 f'(x) P_{xy}(x,y)+ P_{xx}(x,y)}{P_y(x,y)}.
\]
Hence, if $S'=\{ x: f'(x)=1\}$ and $S''=\{ x: f''(x)=0\}$, then the union $S'\cup S''$ has cardinality $O_k(1)$ by another application of Bezout's Theorem (since $\gamma$ is not a line). The closures of the connected components of $\gamma\setminus (S\cup S'\cup S'')$ are the required curves; their union covers all of $\gamma$ except the isolated points (if any). Note that the isolated points lie in $S$ and can thus be covered by $O_k(1)$ curves of the required type.
\end{proof}

The core of the proof of Theorem \ref{thm:polynomial} is again to show that
\begin{equation*}
C:=\sup_{\gamma\in\mathcal{P}_{k},\,n\in\mathbb{N}} Y_n^\gamma < +\infty.
\end{equation*}
By Lemma \ref{lem:bezout}, it is enough to show this for the family $\mathcal{Q}_k$, which consists of those subsets of the algebraic curves in $\mathcal{P}_k$, which are graphs of convex increasing functions with right derivative bounded from above by $1$.

The following simple Lemma is essential in
the proof of Lemma \ref{lem:many-algebraic-to-all-algebraic}.
It should be well known, but we have not been able to find a reference so a proof is included for completeness.

\begin{lemma}\label{lem:convex}
Let $f_1, f_2$ be convex increasing functions defined on $[0,1]$ with right derivative bounded above by $1$, and let $\gamma_i$, $i=1,2$, be their graphs. Then
\[
|\mathcal{H}^1(\gamma_1)-\mathcal{H}^1(\gamma_2)|=O(|f_1-f_2|_\infty)\,,
\]
where $|h|_\infty=\sup_{x\in[0,1]}|h(x)|$.
\end{lemma}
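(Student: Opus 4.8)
The plan is to express the arc length of each $\gamma_i$ as an integral and exploit monotonicity of the relevant integrand in the derivative. Write $L_i = \mathcal{H}^1(\gamma_i) = \int_0^1 \sqrt{1 + f_i'(x)^2}\,dx$, which makes sense since convex functions are locally Lipschitz and differentiable a.e., with $0 \le f_i' \le 1$ a.e. The first step is to reduce to the case where $f_1$ and $f_2$ are well-ordered, say $f_1 \ge f_2$ on $[0,1]$: in general one can sandwich by considering $\max(f_1,f_2)$ and $\min(f_1,f_2)$, but these need not be convex, so instead I would argue directly by splitting $[0,1]$ into the (countably many) maximal intervals on which $f_1 - f_2$ has a fixed sign and handle each separately, noting that on each such interval the endpoint values of $|f_1 - f_2|$ are controlled by $|f_1 - f_2|_\infty$. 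The cleaner route, which I expect to use, is integration by parts.

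Concretely, set $g = \sqrt{1+t^2}$, a $1$-Lipschitz function of $t \in [0,1]$ (since $|g'(t)| = |t|/\sqrt{1+t^2} \le 1$). Then $L_i = \int_0^1 g(f_i'(x))\,dx$. Integrating by parts, $\int_0^1 g(f_i'(x))\,dx = \big[x\, g(f_i'(x))\big]_0^1 - \int_0^1 x \, \frac{d}{dx} g(f_i'(x))$; but a slicker version is to integrate by parts the other way after writing things in terms of $f_i$ rather than $f_i'$. Since $f_i$ is convex, $f_i'$ is nondecreasing, so $g(f_i')$ is of bounded variation and we may write, using a Stieltjes integration by parts,
\[
L_i = g(f_i'(1^-)) - \int_0^1 f_i(x)\, d\big(g(f_i'(x))\big) + \text{(boundary terms)}.
\]
Here $d\big(g(f_i'(x))\big)$ is a nonnegative measure on $[0,1]$ of total mass $g(f_i'(1^-)) - g(f_i'(0^+)) \le g(1) - g(0) = \sqrt{2} - 1 = O(1)$, because $g$ is increasing and $1$-Lipschitz while $f_i'$ ranges in $[0,1]$. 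This is the key structural point: the variation of the integrand is uniformly bounded.

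The main obstacle — and the reason the lemma is not completely trivial — is that one cannot directly compare $f_1'$ and $f_2'$ pointwise; closeness of $f_1, f_2$ in sup norm says nothing about closeness of their derivatives. The integration-by-parts identity circumvents this by trading the derivative for the function itself at the cost of integrating against a measure of bounded mass. Carrying out the comparison: applying the identity to $i=1,2$ and subtracting, the difference $L_1 - L_2$ is a sum of boundary terms (each a difference of values of the $O(1)$-Lipschitz function $g$ composed with derivatives — these need separate care, handled by noting $g$ is bounded on $[0,1]$, so each boundary term is $O(1)$, which is not yet $O(|f_1-f_2|_\infty)$...). This shows the naive boundary analysis is too lossy, so I would instead keep everything in the form $\int (f_1 - f_2)\, d\nu$ for suitable finite measures $\nu$ by first subtracting a common linear function. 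Replacing $f_i$ by $f_i - f_i(0)$ and further normalizing does not change the arc length, and after arranging matching boundary data the boundary terms cancel or become $O(|f_1-f_2|_\infty)$ directly; what remains is $|L_1 - L_2| \le |f_1 - f_2|_\infty \cdot \big(\|\nu_1\| + \|\nu_2\|\big) = O(|f_1-f_2|_\infty)$, since $\|\nu_i\| = O(1)$ by the bound on the variation of $g \circ f_i'$. This completes the proof; the one delicate point to write out carefully is the normalization that kills the boundary terms.
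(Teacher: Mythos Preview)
Your overall strategy---convert the arc-length difference to an integral involving $f_1-f_2$ via integration by parts, exploiting that the total variation of $g\circ f_i'$ is $O(1)$ by convexity---is exactly the paper's idea. But the execution has a genuine gap.

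The Stieltjes formula you write,
\[
L_i = g(f_i'(1^-)) - \int_0^1 f_i(x)\, d\bigl(g(f_i'(x))\bigr) + \text{(boundary terms)},
\]
is not correct: integrating $\int_0^1 g(f_i')\,dx$ by parts against $dv=dx$ gives $\int_0^1 x\, d(g(f_i'))$, not $\int_0^1 f_i\, d(g(f_i'))$. There is no natural integration by parts on $L_i$ alone that produces $f_i$ as the integrand. Moreover, even if one had such a formula, subtracting would leave the uncontrolled cross term $\int f_2\, d(\nu_1-\nu_2)$, where $\nu_i=d(g(f_i'))$; closeness of $f_1,f_2$ in sup norm says nothing about $\nu_1-\nu_2$.

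The paper avoids both problems by subtracting \emph{before} integrating by parts. After a $C^2$ approximation one writes
\[
L_1-L_2=\int_0^1\bigl(\sqrt{1+f_1'^2}-\sqrt{1+f_2'^2}\bigr)\,dt=\int_0^1 a(t)\,(f_1'(t)-f_2'(t))\,dt,
\]
with $a(t)=(f_1'+f_2')\big/\bigl(\sqrt{1+f_1'^2}+\sqrt{1+f_2'^2}\bigr)$. Now a single ordinary integration by parts produces $\int a'(t)(f_1-f_2)\,dt$ plus boundary terms that are manifestly $O(|f_1-f_2|_\infty)$ (since $|a|\le 1$). The remaining point, which is the place convexity is actually used, is that $a'=b_1 f_1''+b_2 f_2''$ with $b_1,b_2$ bounded, so $|a'|_1\le O(1)(|f_1''|_1+|f_2''|_1)=O(1)$ because $|f_i''|_1=f_i'(1)-f_i'(0)\le 1$. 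This is precisely your ``total mass $O(1)$'' observation, but applied to the single function $a$ rather than to $g\circ f_i'$ separately; that is what makes the boundary terms and the cross term disappear cleanly.
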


\begin{proof}
By approximation, we can assume that $f_1, f_2$ are twice continuously differentiable. Then
\begin{align}\label{fg}
\mathcal{H}^1(\gamma_1)-\mathcal{H}^1(\gamma_2)&=\int_{t=0}^1\sqrt{1+f_1'(t)^2}-\sqrt{1+f_2'(t)^2}\,dt\nonumber\\
&=\int_{t=0}^1 a(t)\left(f_1'(t)-f_2'(t)\right)\,dt\,,
\end{align}
where
\[
a(t)=\left(f_1'(t)+f_2'(t)\right)\left(\sqrt{1+f_1'(t)^2}+\sqrt{1+f_2'(t)^2}\right)^{-1}\,.
\]
Then
\[
a'(t) = b_1(t) f''_1(t) + b_2(t) f''_2(t),
\]
where $b_1,b_2$ are continuous functions on $[0,1]$ bounded by $O(1)$. Integrating by parts, we deduce from \eqref{fg} that
\begin{align*}
|\mathcal{H}^1(\gamma_1)-\mathcal{H}^1(\gamma_2)| &\le O(|f_1-f_2|_\infty) + \int_{t=0}^1 |a'(t)| |f_1(t)-f_2(t)| dt\\
&\le  O(|f_1-f_2|_\infty)(1+ |a'|_1),
\end{align*}
where $|\cdot|_1$ denotes the $L^1$ norm on $[0,1]$. But
\[
|a'|_1 \le |b_1|_\infty |f_1''|_1 + |b_2|_\infty |f_2''|_1 = O(1),
\]
using that $|f''_i|_1 = f_i'(1)-f_i'(0)\le 1$ for $i=1,2$, thanks to our assumptions.
\end{proof}

We shall next provide a simple geometric argument implying a bound on the number of $\delta$-balls needed to cover $\mathcal{Q}_k$. Recall that $D$ is the Hausdorff distance on the unit cube.

\begin{lemma}\label{lem:almost-finite-box-dim}
For all $0<\delta<1$, $\mathcal{Q}_k$ can be covered by $\exp(O_k(|\log\delta|^2))$ balls of radius $\delta$ in the $D$-metric.
\end{lemma}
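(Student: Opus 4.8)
The goal is to bound the $D$-covering number of $\mathcal{Q}_k$, the family of graphs of convex increasing functions $f:[a,b]\to[0,1]$ (with $[a,b]\subset[0,1]$) having right derivative bounded by $1$ and lying on an algebraic curve of degree at most $k$. The plan is to forget the algebraicity and simply cover the larger family $\widetilde{\mathcal{Q}}$ of \emph{all} convex increasing $1$-Lipschitz graphs over subintervals of $[0,1]$; a bound of the form $\exp(O(|\log\delta|^2))$ for $\widetilde{\mathcal{Q}}$ immediately gives the claim (the algebraicity is only needed elsewhere, e.g. for Bezout-type splitting, not here). First I would reduce to a fixed domain: the endpoints $a,b$ can be taken in a $\delta$-net of $[0,1]$ at the cost of a factor $O(\delta^{-2})=\exp(O(|\log\delta|))$, and changing the domain by $\delta$ moves the graph by $O(\delta)$ in the $D$-metric (using the Lipschitz bound), so it suffices to cover, for each fixed pair of dyadic-scale endpoints, the family of convex increasing $1$-Lipschitz functions on $[0,1]$ in the sup-norm — which dominates $D$ for graphs — by $\exp(O(|\log\delta|^2))$ balls of radius $O(\delta)$.

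The heart of the matter is thus: \emph{how many convex increasing $1$-Lipschitz functions $f:[0,1]\to[0,1]$ are there up to sup-norm $\delta$?} The key structural fact is that such an $f$ is determined by its derivative $f'$, which is a non-decreasing function $[0,1]\to[0,1]$, and $|f_1-f_2|_\infty \le |f_1'-f_2'|_{L^1}$. So it is enough to cover non-decreasing functions $g:[0,1]\to[0,1]$ in $L^1$ by $\delta$-balls. Here I would use a dyadic multiscale / telescoping argument rather than trying to cover all of BV at once (which would be infinite-dimensional with no good bound). The point is that a monotone function, unlike a general BV function, has total variation $\le 1$, and one can discretize it scale by scale: approximate $g$ by a step function constant on each interval of a partition into $N=\delta^{-1}$ pieces. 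The subtlety — and the main obstacle — is that a monotone function of total variation $1$ can still have a cover of size roughly $\binom{2N}{N}\sim 4^N$ if one is careless, which is \emph{exponential} in $\delta^{-1}$, far worse than $\exp(O(|\log\delta|^2))$. The resolution is that we only need $L^1$-accuracy $\delta$, not uniform accuracy: the values of the step function need only be specified to precision $\sim\delta$ at the coarsest scale but can be specified to much coarser precision on most intervals, because monotonicity forces the ``jumps'' to be sparse. Concretely, I would record, for each dyadic scale $2^{-j}$ with $2^{-j}\ge\delta$, on how many of the $2^j$ dyadic subintervals of length $2^{-j}$ the oscillation of $g$ exceeds $2^{-j}$; monotonicity (variation $\le 1$) forces this count to be $O(2^j)$ — no, that is the trivial bound — the right statement is that $\sum_j (\#\{\text{intervals of length }2^{-j}\text{ with osc}>c 2^{-j}\})\,2^{-j} = O(1)$, so the number of ``active'' intervals at scale $j$ summed appropriately is controlled, and the number of ways to choose the locations and the (coarsely quantized) values is $\prod_j \binom{2^j}{m_j}(\text{values})^{m_j}$ with $\sum_j m_j 2^{-j} = O(1)$. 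Optimizing, the log of this product is $O(|\log\delta|^2)$ — one $|\log\delta|$ from the number of scales and one from the entropy per scale.

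Let me restate the cleanest version of the argument I would actually write. Partition $[0,1]$ into $N = \lceil \delta^{-1}\rceil$ equal intervals $I_1,\dots,I_N$. For a monotone $g:[0,1]\to[0,1]$, define the step function $\bar g$ equal to $g$'s average (or infimum) on each $I_i$; then $|g-\bar g|_{L^1} \le \sum_i |I_i| \,\mathrm{osc}_{I_i}(g) \le N^{-1}\sum_i \mathrm{osc}_{I_i}(g) \le N^{-1}\cdot 1 = \delta$ by monotonicity (the oscillations telescope). Now further round each value $\bar g(I_i)$ down to the nearest multiple of $\delta$: the values $v_1\le v_2\le\dots\le v_N$ are a non-decreasing sequence in $\{0,\delta,2\delta,\dots\}\cap[0,1]$, i.e.\ a non-decreasing sequence of $N$ integers in $\{0,\dots,N\}$, and there are $\binom{2N}{N}$ of these — exponential in $N$, which is \emph{not} good enough. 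Hence the naive uniform rounding fails, and this is exactly the obstacle flagged above. To fix it, I round $\bar g(I_i)$ to precision depending on $i$ in a way adapted to where $\bar g$ is steep: since $\bar g$ is monotone, for any threshold $2^{-j}$ the number of indices $i$ where $\bar g(I_{i+1})-\bar g(I_i) > 2^{-j}/N$... — cleaner: encode $\bar g$ by the sequence of \emph{increments} $d_i = \bar g(I_i)-\bar g(I_{i-1})\ge 0$ with $\sum d_i \le 1$; group indices by $d_i \in [2^{-j-1},2^{-j})$, there are at most $2^{j+1}$ such indices, and rounding $d_i$ to relative precision $\delta$ within its band gives $|\log\delta|$ choices per index; the number of bands that matter is $O(|\log\delta|)$, and the total entropy is $\sum_{j=0}^{O(|\log\delta|)} 2^{j+1}\log(1/\delta)$ — still exponential. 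I see that the truly correct bookkeeping needs the \emph{combined} count-and-value entropy to come out to $|\log\delta|^2$, which means one must exploit that we only need the final $L^1$ error to be $\delta$, allowing increments of size $\lesssim \delta/|\log\delta|$ (say) to be dropped entirely and only $O(|\log\delta|)$ increments of dyadically-spaced sizes to be retained, each located among $N$ positions ($\log\binom{N}{1}=O(|\log\delta|)$ each) and quantized ($O(|\log\delta|)$ each). That gives $O(|\log\delta|)$ retained increments $\times\, O(|\log\delta|)$ bits each $= O(|\log\delta|^2)$, hence $\exp(O(|\log\delta|^2))$ functions, as required. So the plan is: (i) reduce $D$ to sup-norm on a fixed domain, absorbing endpoint choices into a lower-order factor; (ii) reduce sup-norm control of $f$ to $L^1$ control of the monotone function $f'$; (iii) approximate $f'$ in $L^1$ by a step function on $\delta$-intervals; (iv) approximate that step function by retaining only $O(|\log\delta|)$ dyadically-sized increments, each specified by $O(|\log\delta|)$ bits, yielding the count $\exp(O_k(|\log\delta|^2))$. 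The main obstacle is step (iv) — getting the entropy bookkeeping to land on $|\log\delta|^2$ rather than $\delta^{-1}$ — and the key insight making it work is that $L^1$-accuracy (as opposed to uniform accuracy), combined with the total-variation-$\le 1$ rigidity of monotone functions, lets one discard all but logarithmically many increments.
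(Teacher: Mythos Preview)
Your plan has a fatal gap at the very first step: you propose to forget the algebraicity and cover the \emph{full} family $\widetilde{\mathcal{Q}}=\mathcal{C}^+$ of convex, increasing, $1$-Lipschitz graphs by $\exp(O(|\log\delta|^2))$ balls of radius $\delta$. This is impossible. The paper itself proves (Proposition~\ref{prop:dim-convex-functions} and the remark following it) that the $D$-covering number of $\mathcal{C}^+$ satisfies $\log N(\delta)=\Omega(\delta^{-1/2})$, realized by the piecewise-affine family $L_a$ with $N=\delta^{-1/2}$ slopes chosen from an $N$-net. So no argument that ignores the algebraic structure can ever reach $\exp(O(|\log\delta|^2))$; the algebraicity is not ``only needed elsewhere'' --- it is exactly what makes $\mathcal{Q}_k$ so much smaller than $\mathcal{C}^+$.

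Correspondingly, your entropy bookkeeping in step (iv) does not work. You assert that after dropping increments of size $\lesssim\delta/|\log\delta|$ only $O(|\log\delta|)$ increments remain. But the total-variation bound $\sum d_i\le 1$ only forces the number of increments of size $>\delta/|\log\delta|$ to be at most $|\log\delta|/\delta$, not $O(|\log\delta|)$; and the dropped small increments can easily carry almost all of the mass (take all $d_i$ equal to $\delta$, giving $\delta^{-1}$ increments, none large, summing to $1$), so dropping them produces an $L^1$ error of order $1$, not $\delta$. There is no way to repair this: the honest $L^1$-entropy of monotone $[0,1]\to[0,1]$ functions already grows polynomially in $\delta^{-1}$, which is what feeds the $\Omega(\delta^{-1/2})$ lower bound for $\mathcal{C}^+$ after integrating.

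The paper's proof works completely differently and uses the algebraicity in an essential way. It shows a \emph{doubling-type} statement: every $D$-ball $B(\gamma,r)$ in $\mathcal{Q}_k$ can be covered by $O_k(r^{-O_k(1)})$ balls of radius $r/2$. To see this, one places $O(1/r)$ vertical translates $f_i=f+ir/5$ of the center curve and records, on a grid of mesh $r/5$, between which pair of translates each $\widetilde{\gamma}\in B(\gamma,r)$ lies. By \emph{Bezout's theorem}, $\widetilde{\gamma}$ can cross any fixed $f_i$ at most $O_k(1)$ times, so the resulting code word has only $O_k(1)$ transitions and hence only $O_k(r^{-O_k(1)})$ possible values; two curves with the same code are within $r/2$ of each other. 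Iterating this doubling bound over the $O(|\log\delta|)$ dyadic scales from $1$ down to $\delta$ yields the claimed $\exp(O_k(|\log\delta|^2))$. The Bezout input is precisely what is unavailable for general convex graphs and is the reason the bound for $\mathcal{Q}_k$ is qualitatively better than for $\mathcal{C}^+$.
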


\begin{proof}
We prove that given $\gamma\in\mathcal{Q}_k$ and $0<r<1$, we may cover the ball $B(\gamma,r)$ by $O_k(r^{-O_k(1)})$ balls of radius $r/2$.
It then follows by induction on $n$ that $\mathcal{Q}_k=B(\gamma_0,O(1))$ can be covered by $2^{O_k(n^2)}$ balls of radius $2^{-n}$. Given $\delta\in (0,1]$, applying this to $n$ such that $2^{-n}\le \delta< 2^{1-n}$ yields the claim.

Fix $\gamma\in\mathcal{Q}_k$, and for $\widetilde{\gamma}\in B(\gamma,r)$, let $\widetilde{f}:[a_{\widetilde{\gamma}},c_{\widetilde{\gamma}}]\to [0,1]$ be the convex increasing function with graph $\widetilde{\gamma}$. Also, let $\gamma$ be the graph of the corresponding function $f:[a_\gamma,c_\gamma]\to [0,1]$.

For notational convenience, we assume that $\tfrac{5}{r}\in\N$. For $-8\le i\le 8$, let $f_i=f+\tfrac{ir}{5}$. We extend the functions $f_i$ to $[a_\gamma-r,c_\gamma+r]$ by setting $f_i(t)=f_i(a_\gamma)$ for $a_\gamma-r\le t<a_\gamma$ and $f_i(t)=f_i(c_\gamma)$ for $c_\gamma< t\le c_\gamma+r$.
To each $\widetilde{\gamma}\in B(\gamma,r)$ we attach a sequence
\[p=p(\widetilde{\gamma})=(p_{0},p_1,\ldots,p_{5/r})\in\{-\infty,-8,-7,\ldots,7,+\infty\}^{5/r}\]
such that.
\begin{equation*}
p_j=
\begin{cases}
-\infty\text{ if }a_{\widetilde{\gamma}}>\frac{jr}{5}\,\\
i\text{ if }f_i\left(\frac{jr}{5}\right)\le\widetilde{f}\left(\frac{jr}{5}\right)<f_{i+1}\left(\frac{jr}{5}\right)\,\\
+\infty\text{ if }c_{\widetilde{\gamma}}<\frac{jr}{5}\,.
\end{cases}
\end{equation*}

By Bezout's theorem, for any $i$ and $\widetilde{\gamma}\in B(\gamma,r)$, $\widetilde{\gamma}$ intersects the graph of $f_i$ at most $O_k(1)$ times (or otherwise $\widetilde{f}= f_i$). This means that for each $\widetilde{\gamma}\in B(\gamma,r)$, there are at most $O_k(1)$ values $p_j$ such that $p_{j+1}\neq p_j$. Thus, the number of all possible sequences $p(\widetilde{\gamma})$ for $\widetilde{\gamma}\in B(\gamma,r)$ is at most $O( r^{-O(1)})$. In addition, if $p(\widetilde{\gamma})=p(\hat{\gamma})$, it follows from the construction that $\hat{\gamma}\in B(\widetilde{\gamma},r/2)$, recall that the derivative of each $\widetilde{f}\in\mathcal{Q}_k$ is between $0$ and $1$. Combining these observations implies that $B(\gamma,r)$ may be covered by $O(r^{-O(1)})$ balls of radius $r/2$.
\end{proof}

\begin{rem}
It seems likely that the bound $\exp(O_k(|\log\delta|^2))$ in Lemma \ref{lem:almost-finite-box-dim} could be improved to $\delta^{-O_k(1)}$ (this is equivalent to $\mathcal{Q}_k$ having finite box-dimension in the $D$-metric). If this is the case, then \eqref{eq:growth-h-polynomial} in Theorem \ref{thm:polynomial} can be replaced by \eqref{eq:growth-h}. However, we have not been able to prove this nor could we track such a result in the literature. Recall that there is only a mild difference between the conditions \eqref{eq:growth-h-polynomial} and \eqref{eq:growth-h}, and also that it is not known if \eqref{eq:growth-h} is sharp for Theorem \ref{thm:gauge-main-result}.
\end{rem}

As earlier in the case of lines, we ignore the elements of $\mathcal{Q}_k$ that contain a nontrivial line segment of some dyadic line $\{y=k 2^{-n}\}$, $k,n\in\N$. We denote the corresponding family by $\mathcal{Q}'_k$. Note that trivially, Lemma \ref{lem:almost-finite-box-dim} applies also for $\mathcal{Q}'_k$.

\begin{lemma} \label{lem:many-algebraic-to-all-algebraic}
For each $n$, there is a  family of curves $\mathcal{Q}_{n,k}\subset \mathcal{Q}'_k$ such that $\#\mathcal{Q}_{n,k}\le\exp(O_k(n^2))$, and
\begin{equation*}
\sup_{\gamma\in \mathcal{Q}'_k}Y^{\gamma}_n\le\sup_{\gamma\in \mathcal{Q}_{n,k}}Y^{\gamma}_n +  O_k((4/5)^n),
\end{equation*}
for any realization of $A$.
\end{lemma}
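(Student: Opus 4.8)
The plan is to mimic the proof of Lemma \ref{lem:many-lines-to-all-lines}, using the covering bound of Lemma \ref{lem:almost-finite-box-dim} in place of the elementary $O(1)^n$-net, and Lemma \ref{lem:convex} to control the error arising from approximating one convex curve by another. First I would fix $n$ and set $\delta=(4/5)^n$ (or some comparable $c^n$), and let $\mathcal{Q}_{n,k}$ be a maximal $\delta$-separated subset of $\mathcal{Q}'_k$ in the $D$-metric; by Lemma \ref{lem:almost-finite-box-dim} its cardinality is at most $\exp(O_k(|\log\delta|^2))=\exp(O_k(n^2))$, as required. (If maximality forces a few points to lie in $\mathcal{Q}_k\setminus\mathcal{Q}'_k$, perturb them slightly; this does not affect the $Y_n$-values by more than $O(2^{-n})$ since such a perturbation changes $\mathcal{H}^1(\gamma\cap A_n)$ by a negligible amount, or simply note density arguments let us stay inside $\mathcal{Q}'_k$.)

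The heart of the matter is then to show that for any $\gamma\in\mathcal{Q}'_k$, choosing $\gamma'\in\mathcal{Q}_{n,k}$ with $D(\gamma,\gamma')\le\delta$ gives $Y_n^\gamma\le Y_n^{\gamma'}+O_k((4/5)^n)$. Recalling $Y_n^\gamma=4^nP_n^{-1}\mathcal{H}^1(A_n\cap\gamma)$ and $P_n=\Omega(2^n)$, it suffices to prove $\mathcal{H}^1(A_n\cap\gamma)\le\mathcal{H}^1(A_n\cap\gamma')+O_k(4^{-n})$. Here I would argue square-by-square over the $O(2^n)$ cubes $Q\in\mathcal{D}_n$ meeting $\gamma$: for each such $Q$, I want $\mathcal{H}^1(\gamma\cap Q)\le\mathcal{H}^1(\gamma'\cap Q)+O_k(8^{-n})$, which summed over the $O(2^n)$ relevant cubes yields the claimed $O_k(4^{-n})$ bound (after intersecting with $A_n$, which only removes cubes). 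Since $\gamma,\gamma'$ are graphs of convex increasing functions $f,f'$ with derivative in $[0,1]$ and $|f-f'|_\infty\le D(\gamma,\gamma')\le\delta$ (up to the rotations/reflections of the reduction, which do not change arclength), Lemma \ref{lem:convex}, applied on the (rescaled, affinely normalised) portion of $f,f'$ over $Q$, gives exactly $|\mathcal{H}^1(\gamma\cap Q)-\mathcal{H}^1(\gamma'\cap Q)|=O(|f-f'|_{\infty,Q})=O(\delta)$ on each unit-normalised cube, hence $O(2^{-n}\delta)=O(8^{-n}\cdot(8/5)^n)$ — one checks that with the choice $\delta=(4/5)^n$ the total over all $O(2^n)$ cubes is $O((8/5)^n\cdot 2^n\cdot\text{something})$; to land on the stated $O_k((4/5)^n)$ one simply picks $\delta$ small enough, e.g. $\delta = (2/5)^n$ or absorbs the polynomial factors, since Lemma \ref{lem:almost-finite-box-dim} tolerates any $\delta=c^n$.

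Two technical points need care. First, near the endpoints of the interval of definition: $\gamma$ and $\gamma'$ need not be defined over the same interval, so in a cube $Q$ containing an endpoint one should bound $\mathcal{H}^1(\gamma\cap Q)\le\diam(Q)\cdot O_k(1)=O_k(2^{-n})$ directly (using $\mathcal{H}^1(\gamma\cap Q)=O_k(\diam Q)$ for $\gamma\in\mathcal{P}_k$, the same bound invoked for Lemma \ref{lem:large-deviation}), and there are only $O_k(1)$ such cubes since after the Bezout reduction each piece has one interval of definition; this contributes only $O_k(2^{-n})$, which after the $4^nP_n^{-1}=O(2^{-n})$ scaling is $O(4^{-n})$, negligible. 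Second, Lemma \ref{lem:convex} is stated on $[0,1]$, so over a cube $Q$ of side $2^{-n}$ one rescales by $2^n$; rescaling multiplies lengths by $2^n$ and preserves the derivative bound $\le 1$ and convexity, so the lemma yields $|\text{rescaled lengths difference}|=O(2^n|f-f'|_\infty)$, i.e. $|\mathcal{H}^1(\gamma\cap Q)-\mathcal{H}^1(\gamma'\cap Q)|=O(|f-f'|_\infty)=O(\delta)$, uniformly in $Q$ — the implicit constant in Lemma \ref{lem:convex} is absolute, so summing over the $O(2^n)$ cubes costs a factor $2^n$ only. I expect the main obstacle to be bookkeeping the endpoint/boundary cubes and verifying that the rotations and reflections in the Bezout reduction genuinely reduce everything to the normalised convex-graph situation to which Lemmas \ref{lem:convex} and \ref{lem:almost-finite-box-dim} apply; the probabilistic content is nil here, as the lemma holds for every realisation of $A$.
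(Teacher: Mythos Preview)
Your overall strategy matches the paper's: take a $\delta$-net in $\mathcal{Q}'_k$ via Lemma \ref{lem:almost-finite-box-dim}, compare lengths cube-by-cube using Lemma \ref{lem:convex}, and sum. But there is a genuine gap at the step where you assert
\[
\bigl|\mathcal{H}^1(\gamma\cap Q)-\mathcal{H}^1(\gamma'\cap Q)\bigr|=O(|f-f'|_\infty)=O(\delta).
\]
Lemma \ref{lem:convex} compares the arclengths of two graphs over a \emph{common} $x$-interval; it says nothing about the length of the intersection with a fixed square $Q$. The set $\gamma\cap Q$ is the graph of $f$ over $\{x\in I_Q:f(x)\in J_Q\}$, and this $x$-set can change drastically under a small $L^\infty$ perturbation of $f$ when the curve is nearly horizontal and close to a horizontal edge of $Q$. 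Concretely, if $f$ has derivative $\ll 2^{-n}$ on a column $I_Q$ and sits at height $k2^{-n}-\varepsilon$, while $f'$ sits at height $k2^{-n}+\varepsilon$, then $\mathcal{H}^1(\gamma\cap Q)=2^{-n}$ and $\mathcal{H}^1(\gamma'\cap Q)=0$ even though $|f-f'|_\infty=2\varepsilon$ is arbitrarily small. Your rescaling argument establishes that the graphs of $f$ and $f'$ over $I_Q$ have nearly the same length, but those graphs need not lie in the same cube.

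This is precisely the issue that forces the extra families $\mathcal{A}^1_n,\mathcal{A}^2_n$ in the proof of Lemma \ref{lem:many-lines-to-all-lines} (which you say you are mimicking, but you have dropped this part). The paper's proof of the present lemma handles it the same way: after taking the $25^{-n}$-net $\Gamma'$, it enlarges $\Gamma'$ by adding, for each $\gamma\in\Gamma'$ whose flat part (where $f'(x^+)<5^{-n}$) lies within $O(5^{-n})$ of a dyadic height $k2^{-n}$, a family $\mathcal{E}_\gamma$ of $O(5^n)$ vertical translates chosen so that some translate crosses $y=k2^{-n}$ at each point of a $5^{-n}$-grid. Then for an arbitrary $\gamma$, one picks $\widetilde{\gamma}$ from this enlarged family so that it crosses the relevant dyadic line at nearly the same $x$-coordinate as $\gamma$, which restores the cube-by-cube inequality $\mathcal{H}^1(\gamma\cap Q)\le\mathcal{H}^1(\widetilde{\gamma}\cap Q)+O_k(5^{-n})$. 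Without this enrichment step your per-cube estimate fails and the argument does not close.
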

\begin{proof}

To begin with, take $\delta=25^{-n}$ and let $\Gamma'\subset\mathcal{Q}'_k$ be the $\delta$-dense family of size $\exp(O(n^2))$ given by Lemma \ref{lem:almost-finite-box-dim}. We will next modify $\Gamma'$ by adding a finite number of translates of each $\gamma\in\Gamma'$: Let $\gamma\in\Gamma'$, and let $\gamma$ be the graph of $f\colon[a,c]\rightarrow[0,1]$. Let $(a,b)$ be the interval on which $f'(x^+)<5^{-n}$. If there is $k\in\N$ such that $|f(a)-k 2^{-n}|\le 5^{-n}$, we choose numbers $-O(5^{-n})<y_i<O(5^{-n})$ for each $a\le i 5^{-n}\le b$, $i\in\N$, such that the function $f_i(x)=f(x)+y_i$ crosses the dyadic line $y=k 2^{-n}$ at $x_i=i 5^{-n}$. Let $\mathcal{E}_\gamma$ consist of $\gamma$ and all the graphs of $f_i$.
We define
\[
\Gamma=\bigcup_{\gamma\in\Gamma'}\mathcal{E}_\gamma\,.
\]

Since each $\mathcal{E}_\gamma$ contains at most $O(5^n)$ elements, it follows that the cardinality of $\Gamma$ is $\exp(O(n^2))$.
Moreover, using Lemma \ref{lem:convex} it can be checked that, for any $\gamma\in\mathcal{Q}_k$, there is $\widetilde{\gamma}\in\Gamma$ such that
\[
\mathcal{H}^1(\gamma\cap Q)\le \mathcal{H}^1(\widetilde{\gamma}\cap Q) + O_k(5^{-n})\quad\text{for all }Q\in\mathcal{D}_n.
\]
We leave the verification of the several simple cases to the reader, or see Lemma \ref{lem:many-convex-to-all-convex} for a similar but more complicated argument.

As in the proof of Lemma \ref{lem:many-lines-to-all-lines}, the claim now follows by adding over all chosen $Q$, using the trivial bound $P_n=\Omega(2^n)$, and recalling the definition of $Y_n^\gamma$ (see \eqref{Yn-def}).
\end{proof}

\begin{proof}[Proof of Theorem \ref{thm:polynomial}]
 Once we have analogs of Lemmas \ref{lem:large-deviation} and \ref{lem:many-lines-to-all-lines}, the proof of Theorem \ref{thm:main-technical-result} works verbatim to yield that almost surely
\begin{equation*}
C:=\sup_{\gamma\in\mathcal{Q}_{k},\,n\in\mathbb{N}} Y_n^\gamma < +\infty.
\end{equation*}
(Replacing Lemma \ref{lem:many-lines-to-all-lines} by Lemma \ref{lem:many-algebraic-to-all-algebraic} and \eqref{eq:growth-h} by \eqref{eq:growth-h-polynomial}, the upper bound in \eqref{eq:poly-times-exp} reads $\exp\left(O(n^2)-C^2\Omega(n^2)\right)$.)

To conclude the proof, fix some $\gamma\in\mathcal{Q}_k$; suppose $\gamma$ is the graph of $f:[a,b]\to \mathbb{R}$. Then for any $\delta>0$,
\[
\gamma(\delta) \subset B((a,f(a)),\delta) \cup B((b,f(b)),\delta) \cup \{ (x,y): |y-f(x)|<2\delta \}\,.
\]
Comparing the definitions of $\mu_n$ and $Y_\gamma^{n}$ (see \eqref{mun-def} and \eqref{Yn-def}), it then follows from Fubini's theorem that $\mu_n(\gamma(\delta)) \le O(C\delta)$, and hence the same bound holds for $\mu$ and all $\gamma\in\mathcal{P}_k$. The proof then finishes as in the case of lines.
\end{proof}

\begin{proof}[Proof of Theorem \ref{thm:bounded-density-projections}]
Take $h(t)=t^\beta$ with $\beta>d-1$. Then the random measure $\mu$ satisfies $\mu(T)=\Theta(\mathcal{H}^\beta(T\cap A))$ for any Borel set $T$. In the proof of Theorem \ref{thm:gauge-main-result} we observed that (as an easy consequence of Theorem \ref{thm:main-technical-result}) $\sup_T \mu(T)/w(T)^{d-1}<\infty$ where the supremum is over all tubes in $\R^d$. Likewise, in the proof of Theorem \ref{thm:polynomial}, the same was proved for tubes around algebraic curves in $\R^2$. The theorem follows.
\end{proof}

\begin{proof}[Proof of Theorem \ref{thm:fibres}]
We consider first the case of lines in $\mathbb{R}^d$. By Theorem \ref{thm:main-technical-result} and Fubini, there exists $C>0$ and a realization of the random set $A$ such that $\mu_n(\gamma(\delta)) \le C\delta$ for all $\delta>0,n\in\mathbb{N}$ and all lines $\gamma$. In particular, this holds for $\delta:=\sqrt{d}\cdot 2^{-n}$. Since any chosen cube in $A_n$ which intersects $\gamma$ is then contained in $\gamma(\delta)$, it follows from the definition of $\mu_n$ that $\gamma$ can intersect at most $O(2^{-n}/h(2^{-n}))$ such cubes. From here the theorem follows easily.

The situation for algebraic curves is identical, using the proof of Theorem \ref{thm:polynomial} instead.
\end{proof}

\section{Proof of Theorem \ref{thm:convex} \label{sec:proofs2}}

\subsection{Initial reductions}

The proof of Theorem \ref{thm:convex} follows once again a similar pattern. However, bounding the number of $D$-balls of radius $\delta$ needed to cover $\mathcal{C}$ is more delicate (and the bound is much larger than for the case of $\mathcal{P}_k$).

We start with some notation and reductions.
Abusing notation slightly, we will sometimes identify functions $f:[0,1]\to[0,1]$ with their graphs.
We denote by $\mathcal{C}^+$ the subset of $\mathcal{C}$ consisting of non-decreasing functions with right derivative bounded above by $1$. We note that since every curve in $\mathcal{C}$ is the union of at most four curves which are obtained from a curve in $\mathcal{C}^+$ by a possible $\pi/2$ rotation and/or a reflection, it is enough to prove Theorem \ref{thm:convex} for $\mathcal{C}^+$ instead of $\mathcal{C}$. (To be more precise, an arbitrary $f\in\mathcal{C}$ is the union of such four curves defined on some interval $[a,b]$ rather than $[0,1]$; by continuing them linearly to the left of $a$ and the right of $b$, there is no harm in assuming they are defined on all of $[0,1]$.)

\subsection{Bounding the size of $\mathcal{C}^+$}

\begin{prop} \label{prop:dim-convex-functions}
For every $0<\delta<1$, $\mathcal{C}^+$ contains a $\delta$-dense subset (in the Hausdorff metric $D$) with  $\exp(O(\delta^{-1/2}|\log\delta|))$
elements.
\end{prop}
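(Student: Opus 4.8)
The plan is to parametrize convex increasing $1$-Lipschitz functions on $[0,1]$ by a finite amount of combinatorial data at a scale adapted to $\delta$, and then count the number of possibilities for that data. The key point is that for such an $f$, the derivative $f'$ is itself a non-decreasing function taking values in $[0,1]$; so $f$ is essentially determined by its derivative, which is a monotone ``staircase''. I would first fix an auxiliary scale $m\approx\delta^{-1/2}$ (the precise choice will fall out of the counting) and discretise both the domain and the range of $f'$ into $m$ pieces. Knowing, for each of the $m$ domain intervals, the approximate value of $f'$ there (one of $\approx m$ choices, but constrained to be monotone) determines $f'$ up to an additive error $\approx 1/m$ on each interval, and hence, after integrating, determines $f$ up to a uniform error of order $1/m$ plus the accumulated quadrature error, which is again $O(1/m)$ since each of the $m$ intervals of length $1/m$ contributes at most $(1/m)\cdot(1/m)$. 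Choosing $m\approx\delta^{-1/2}$ makes this total error $O(\delta)$, so such a family is $\delta$-dense (after snapping the finitely many parameters to a grid and possibly adjusting the additive constant $f(0)$, one more parameter with $O(1/\delta)$ choices, which is absorbed).

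The counting step is then: how many non-decreasing sequences $0\le v_1\le v_2\le\cdots\le v_m\le 1$ are there when each $v_j$ is drawn from a grid of $\approx m$ equally spaced values? This is a standard stars-and-bars count, $\binom{2m}{m}\approx 4^m=\exp(O(m))$. With $m\approx\delta^{-1/2}$ this gives $\exp(O(\delta^{-1/2}))$. The extra logarithmic factor $|\log\delta|$ in the statement presumably comes from being slightly more careful: one does not discretise the range of $f'$ uniformly but must also account for the left endpoint $a$ of the interval of definition and the value $f(a)$, or from a refinement where the range is subdivided more finely (into $\approx m\,|\log\delta|$ or $m/\delta$ levels) to control the integration error near points where $f'$ jumps — but in any case the count stays of the form $\exp(O(m\log m))=\exp(O(\delta^{-1/2}|\log\delta|))$. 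I would present the clean stars-and-bars bound first and then insert whatever mild refinement of the range-discretisation is needed to make the $\delta$-density rigorous, tracking the resulting (at most polylogarithmic) inflation of the base.

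The main obstacle I anticipate is not the counting but verifying the $\delta$-density claim carefully, i.e.\ controlling the Hausdorff distance $D(\gamma,\widetilde\gamma)$ between the true graph and its discretised approximant. Two convex $1$-Lipschitz graphs that are uniformly $\e$-close as functions are automatically $\e$-close in $D$ (a graph of a $1$-Lipschitz function over $[0,1]$ has the property that its $D$-neighbourhood and its vertical $\e$-neighbourhood are comparable), so the task reduces to a uniform ($L^\infty$) estimate $|f-\widetilde f|_\infty=O(\delta)$. That in turn follows from $\int_0^1|f'-\widetilde f'|\,dt=O(\delta)$, and since both derivatives are monotone in $[0,1]$ and agree up to $O(1/m)=O(\delta^{1/2})$ on each of the $m$ pieces after rounding, the integral is $O(1/m\cdot 1)=O(\delta^{1/2})$ — which is \emph{not} $O(\delta)$. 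This is exactly why the finer discretisation of the range is forced: to get the integral down to $O(\delta)$ one needs the staircase to track $f'$ to precision $O(\delta)$ on most of the domain, tolerating coarser precision only near the $O(1)$... in fact up to $O(m)$ jump locations, which is where the bookkeeping becomes delicate and where I expect to spend most of the effort. Once the $L^\infty$ bound is in hand, translating to $D$ and finishing is routine.
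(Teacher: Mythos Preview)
Your overall strategy---discretise, count monotone data, translate to an $L^\infty$ bound on $f$---is the same as the paper's, and you correctly isolate the obstacle: a uniform grid of $m\approx\delta^{-1/2}$ domain points with $f'$ quantised to step $1/m$ only yields $|f-\widetilde f|_\infty=O(1/m)=O(\delta^{1/2})$, not $O(\delta)$.

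However, your proposed fix (``finer discretisation of the range \dots\ tolerating coarser precision only near the $O(m)$ jump locations'') does not close the gap. Refining the \emph{range} of $f'$ to step $1/m^2$ does not help: the dominant error comes from the oscillation of $f'$ \emph{within} a single domain interval, and that oscillation can be $\Theta(1)$ on one interval regardless of how finely you quantise the recorded value. Concretely, if $f'$ jumps from $0$ to $1$ somewhere in $[j/m,(j+1)/m]$, then any approximant with $\widetilde f'$ constant on that interval misses $f$ by $\Theta(1/m)$ at some point in it. Refining the \emph{domain} grid uniformly to step $1/m^2$ does control the oscillation (each interval now has oscillation summing to $\le 1$), but then you are recording $m^2$ values and the count blows up. Your final sentence is honest that this ``is where the bookkeeping becomes delicate'', but the proposal does not contain the idea that resolves it.

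The paper's resolution is an \emph{adaptive} domain grid. Set $N=\delta^{-1/2}$, start at $x_0=0$, and take $x_{k+1}$ to be the smaller of $x_k+1/N$ and the first point (on the $1/N^2$ mesh) at which $f'$ has increased by $1/N$ since $x_k$. Because $f'$ is monotone with total increase at most $1$, this terminates in at most $2N+1$ steps, and by construction the oscillation of $f'$ on each subinterval (minus a final sliver of length $1/N^2$) is at most $1/N$, so the interval has length $\le 1/N$ and slope variation $\le 1/N$ simultaneously. One then records the values of $f$ (not $f'$) at these grid points, quantised to $1/N^2$; linear interpolation is then $O(1/N^2)=O(\delta)$-close to $f$ in $L^\infty$. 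The count is over pairs $(Y,p)$ with $Y\subset\{0,1/N^2,\dots,1\}$, $\#Y\le 2N+1$, and $p\colon Y\to\{0,1/N^2,\dots,1\}$ increasing, which is at most $N^{O(N)}=\exp(O(\delta^{-1/2}|\log\delta|))$. This is also where the logarithmic factor genuinely enters: it is the cost of recording the \emph{positions} of the adaptive grid points to precision $1/N^2$, not an artefact of loose bookkeeping.
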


The idea of the proof of Proposition \ref{prop:dim-convex-functions} is to associate to each $f\in\mathcal{C}^+$ a finite collection of numbers, in such a way that knowing each of these numbers with an error up to $\delta$ allows to construct a piecewise affine approximation which is within distance $O(\delta)$ of $f$. The problem is then reduced to a counting problem in a much more straightforward space. Of course, this can be done with any continuous function; the trick is to exploit the convexity and monotonicity of $f$ to show that, in essence, $\exp(O(\delta^{-1/2}|\log\delta|))$ numbers suffice to reconstruct $f$ up to error $O(\delta)$.

From now on, we assume that $\delta^{-1/2}$ is an integer $N$ (for simplicity of notation). Let us first define the parameter space. Let $X=\{0,\frac{1}{N^{2}},\ldots,\frac{N^{2}-1}{N^{2}},1\}$ and let
$\Lambda$ be the family of all increasing functions $f\colon Y\rightarrow X$, where $Y\subset X$ has at most $2N+1$ elements. It is straightforward to check that $\#\Lambda
\le N^{O(N)}$.

We reduce the proof of Proposition \ref{prop:dim-convex-functions} to the following:
\begin{prop}\label{prop:P}
There is a mapping $P\colon\mathcal{C}^+\rightarrow\Lambda$ such that if $f,\widetilde{f}\in\mathcal{C}^+$ and $P(f)=P(\widetilde{f})$, then $D(f,\widetilde{f})=O(N^{-2})$\,.
\end{prop}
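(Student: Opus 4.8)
The plan is to build the map $P$ by recording, for a carefully chosen finite set of abscissae, the approximate values of $f$ together with the approximate values of its right derivative at the same points; the convexity of $f$ will then force a piecewise affine interpolant built from these data to stay within $O(N^{-2})$ of $f$ in the Hausdorff metric. The crucial point, and the reason the count is only $N^{O(N)}$ rather than exponential in $N^2$, is that the right derivative $f'(x^+)$ is itself a monotone function taking values in $[0,1]$, so it can only ``jump'' by a total amount $\le 1$; this means that although we need to control $f$ at roughly $N^2$ scales vertically, we only need $O(N)$ ``active'' sample points where the derivative changes appreciably, and everywhere else $f$ is essentially affine and hence determined by its endpoints.

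First I would fix the grid $X$ and set up the sampling. Partition $[0,1]$ into the $N^2$ intervals $I_j=[j N^{-2},(j+1)N^{-2}]$. Since $f'(x^+)$ is nondecreasing and lies in $[0,1]$, the number of indices $j$ for which the oscillation of $f'$ across $I_j$ exceeds $N^{-1}$ is at most $N$; call these the \emph{bad} intervals, and on each bad interval choose one sample point. On the complementary (good) intervals the slope of $f$ varies by at most $N^{-1}$ over a horizontal length $N^{-2}$, so consecutive good intervals can be merged into maximal runs on which $f$ differs from the affine function through its endpoints by $O(N^{-1}\cdot N^{-2})=O(N^{-3})$, which is negligible at scale $N^{-2}$; hence it suffices to record $f$ at the endpoints of these $O(N)$ runs and at the $O(N)$ bad sample points — a total of at most $2N+1$ abscissae after relabelling onto the grid $X$. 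At each such abscissa we round the value $f(x)$ to the nearest point of $X$. This defines the domain $Y=Y(f)\subset X$ and the function $P(f):Y\to X$; it is nondecreasing because $f$ is, and $\#Y\le 2N+1$, so $P(f)\in\Lambda$.

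Next I would verify the separation property. Suppose $P(f)=P(\widetilde f)=:g$, with common domain $Y=\{y_0<y_1<\dots\}$. Between consecutive sample points both $f$ and $\widetilde f$ are, by the construction, within $O(N^{-2})$ of the affine function joining $(y_i,g(y_i))$ to $(y_{i+1},g(y_{i+1}))$ — on a merged good run this is the content of the previous paragraph, and on a short interval containing a single bad point it follows because the horizontal gap is $O(N^{-2})$ and the slope is bounded by $1$, so $f$ varies by $O(N^{-2})$ there. Therefore $|f-\widetilde f|_\infty=O(N^{-2})$, and since both graphs are graphs of $1$-Lipschitz functions on $[0,1]$ this gives $D(f,\widetilde f)=O(N^{-2})$, proving Proposition \ref{prop:P}; Proposition \ref{prop:dim-convex-functions} then follows because $\#\Lambda\le N^{O(N)}=\exp(O(N\log N))=\exp(O(\delta^{-1/2}|\log\delta|))$.

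\textbf{Main obstacle.} The delicate point is the bookkeeping that guarantees $\#Y\le 2N+1$ \emph{and} controls the approximation error simultaneously: one must choose the threshold for declaring an interval ``bad'' (here $N^{-1}$ for the slope oscillation) so that (i) there are at most $\sim N$ bad intervals, using that the total variation of $f'(x^+)$ is $\le 1$, and (ii) on the good runs the accumulated deviation from the affine interpolant is still $o(N^{-2})$ despite such a run possibly spanning many $I_j$'s — this needs the elementary fact that the deviation of a convex function from its chord on an interval is controlled by the slope oscillation times the interval length, so over a good run of horizontal length $\le 1$ with slope oscillation $\le N^{-1}$ it is $O(N^{-1})$, which is \emph{not} $o(N^{-2})$ and so actually forces us to also subdivide long good runs at grid points of $X$, adding back at most $N^2$ potential points — here one must instead argue that a long good run has \emph{small} slope oscillation precisely because it avoids all bad intervals, and split it only at the $O(N)$ places where its accumulated rise crosses a multiple of $N^{-1}$. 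Getting this three-way balance (number of sample points, vertical resolution, and chord error) to close is the real work; everything else is routine.
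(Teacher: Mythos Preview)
Your overall strategy is right and close to the paper's, but the construction as written has a genuine gap, and the fix you propose in the ``Main obstacle'' paragraph does not close it. The false step is ``consecutive good intervals can be merged into maximal runs on which $f$ differs from the affine function through its endpoints by $O(N^{-1}\cdot N^{-2})$'': the chord deviation on a single good $I_j$ is indeed $O(N^{-3})$, but these do not simply add under merging. Take $f(x)=x^2/2$: then $f'$ oscillates by exactly $N^{-2}<N^{-1}$ on every $I_j$, so \emph{all} intervals are good and the only run is $[0,1]$ itself, yet the deviation of $f$ from its chord on $[0,1]$ is $1/8$. Thus ``a long good run has small slope oscillation precisely because it avoids all bad intervals'' is false. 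Splitting by where the \emph{rise} of $f$ crosses multiples of $N^{-1}$ does not help either: in the same example the first piece is $[0,\sqrt{2}\,N^{-1/2}]$, with chord deviation $(b-a)^2/8=N^{-1}/4$, still only $\Theta(N^{-1})$.

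The correct fix --- which is exactly the paper's construction --- is to cap the \emph{horizontal} gap between consecutive sample points at $N^{-1}$, while also inserting a sample point whenever the right derivative has increased by $N^{-1}$ since the previous one. Each step then consumes either $N^{-1}$ of horizontal length or $N^{-1}$ of the total variation of the monotone function $f'(\cdot^+)\in[0,1]$, so there are at most $2N$ steps and $\#Y\le 2N+1$. On each resulting interval (minus its last grid cell of length $N^{-2}$, which is handled trivially since $|f'|\le 1$) one has length $\le N^{-1}$ \emph{and} slope oscillation $\le N^{-1}$, hence chord deviation $O(N^{-2})$; from here the rest of your argument goes through verbatim.
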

This indeed implies Proposition \ref{prop:dim-convex-functions}: the needed $O(\delta)$-dense collection in $\mathcal{C}^+$ is obtained by choosing one element from $P^{-1}(\lambda)$ for each $\lambda\in P(\mathcal{C}^+)$.

To define the projection $P$, we fix $f\in\mathcal{C}^+$. We first construct $Y=Y_f$ inductively as follows: Let $x_0=0$. If $x_k<1$ is defined, let
\[x_{k+1}=\min\left\{1,x_k+\frac1N,\inf\{x\in X\,:\,x_k<x\,, f'(x^+)\ge f'(x_k^+)+\tfrac1N\}\right\}\,.\]
We stop the construction when $x_k=1$, and set $Y=\{x_0,x_1,x_2,\ldots, x_k\}$.

\begin{lemma}\label{lem:Y}
For each $f\in\mathcal{C}^+$, the set $Y_f$ satisfies
\begin{gather}
\label{Nbound}\#Y\le 2N+1,\\
\label{1N} \frac{1}{N^2}<x_{i}-x_{i-1}\le\frac{1}{N}\text{ for each }0<x_i\in Y.
\end{gather}
In addition, for each $0<x_{i}\in Y$, we have
\begin{gather}
\label{ab}|f'(t)-\frac{f(\widetilde{x}_i)-f(x_{i-1})}{\widetilde{x}_{i}-x_{i-1}}|=O(1/N)\text{ for }x_{i-1}\le t\le \widetilde{x}_i\,,
\end{gather}
where $\widetilde{x}_i=x_i-N^{-2}$.
\end{lemma}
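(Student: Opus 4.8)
The plan is to extract the three conclusions from the defining recursion for the $x_i$, using only that $f\in\mathcal{C}^+$, i.e. $f$ is convex, non-decreasing, with $0\le f'(x^+)\le 1$. First I would establish \eqref{1N}: the upper bound $x_i-x_{i-1}\le 1/N$ is immediate from the ``$x_k+\tfrac1N$'' term in the minimum defining $x_{k+1}$ (and the truncation at $1$ only makes the step shorter). For the lower bound $x_i-x_{i-1}>N^{-2}$, note that $X$ has mesh $N^{-2}$, so any two distinct elements of $X$ differ by at least $N^{-2}$; since $x_{i}$ and $x_{i-1}$ are distinct points of $X$ (the recursion never returns the same point, as each candidate in the $\inf$ is $>x_k$, and $x_k+\tfrac1N>x_k$), we get $x_i-x_{i-1}\ge N^{-2}$, and in fact the strict inequality holds because $\tfrac1N=N\cdot N^{-2}>N^{-2}$ while the $\inf$-candidate, if active, is a point of $X$ strictly above $x_k$, hence at distance $\ge N^{-2}$; one should check the edge case where this distance is exactly $N^{-2}$ — but then $\widetilde x_i=x_i-N^{-2}=x_{i-1}$ and \eqref{ab} is vacuous, so either way the statement as used is fine. (If one prefers strict inequality always, observe the $\inf$-candidate $x$ satisfies $f'(x^+)\ge f'(x_{k}^+)+\tfrac1N$, and one can argue it cannot be the immediate successor $x_{k}+N^{-2}$ for small $N^{-1}$ using $|f'|\le 1$ combined with a convexity estimate; I would simply note the vacuous case and move on.)

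Next, \eqref{Nbound}: by \eqref{1N} the steps have length $>N^{-2}$, but a cruder count is cleaner. Split the indices $i$ into those where the step was ``forced by the derivative jump'' (the $\inf$-term achieved the minimum and was $<x_{i-1}+\tfrac1N$) and those where it was ``forced by length'' ($x_i=x_{i-1}+\tfrac1N$, or the final truncation at $1$). There are at most $N$ indices of the second type since each advances $x$ by exactly $1/N$ within $[0,1]$. For the first type, at each such step the right derivative $f'(\cdot^+)$ increases by at least $1/N$; since $f'$ is non-decreasing (convexity) and stays in $[0,1]$, there can be at most $N$ such steps. Hence $k\le 2N$ and $\#Y=k+1\le 2N+1$.

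Finally, \eqref{ab}: fix $0<x_i\in Y$ and set $\widetilde x_i=x_i-N^{-2}$; by construction of $x_i$ (whether it came from the length constraint or the derivative constraint, the defining property of the $\inf$ not being attained before $x_i$ gives) we have $f'(x^+)<f'(x_{i-1}^+)+\tfrac1N$ for all $x\in X$ with $x_{i-1}\le x<x_i$, and in particular for $x$ up to $\widetilde x_i$. Since $f'$ is non-decreasing, for every $t\in[x_{i-1},\widetilde x_i]$ we get
\[
0\le f'(t^+)-f'(x_{i-1}^+)\le f'(\widetilde x_i^{\,-})-f'(x_{i-1}^+) < \tfrac1N.
\]
Thus $f'$ varies by at most $1/N$ on the interval $[x_{i-1},\widetilde x_i]$. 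The slope of the chord, $\big(f(\widetilde x_i)-f(x_{i-1})\big)/(\widetilde x_i-x_{i-1})$, is an average of $f'$ over that interval, hence also lies within $1/N$ of $f'(t^+)$ for every $t$ in the interval; handling left/right derivatives at countably many corner points is harmless since they agree a.e. This gives \eqref{ab}. The only genuinely delicate point is the bookkeeping in \eqref{Nbound} — making sure the two types of steps are correctly separated and that the $\inf$-term, when it strictly governs, really does produce a derivative increment of at least $1/N$ that is not double-counted; everything else is a direct unwinding of the construction and the elementary properties of convex functions.
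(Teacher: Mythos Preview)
Your proposal is correct and follows essentially the same approach as the paper's proof, just with considerably more detail: the paper dispatches \eqref{Nbound} in one line via monotonicity and boundedness of $f'$, declares \eqref{1N} ``clear'', and proves \eqref{ab} via the chain $f'(x_{i-1}^+)\le f'(t^+)\le f'(\widetilde{x}_i^{\,+})\le f'(x_{i-1}^+)+N^{-1}$, exactly as you do. Your observation about the edge case $x_i-x_{i-1}=N^{-2}$ (where $\widetilde{x}_i=x_{i-1}$ and \eqref{ab} degenerates) is a genuine point the paper glosses over; your resolution---that the case is vacuous for the downstream application---is the right one, whereas the alternative you sketch (ruling out a derivative jump of $1/N$ over length $1/N^2$ by convexity) does not actually work for general $f\in\mathcal{C}^+$.
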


\begin{proof}
The first claim follows directly from the construction of $Y$ since $0\le f'(t^+)\le 1$ is non-decreasing by  convexity. The claim \eqref{1N} is also clear. The last claim follows from \eqref{1N} and the fact $f'(x^+_{i-1})\le f'(t^+)\le f'(\widetilde{x}_{i}^+)\le f'(x^+_{i-1})+N^{-1}$ for all $x_{i-1}<t<\widetilde{x}_i$.
\end{proof}

We may now complete the definition of $p=P(f)$. For each $x\in Y$, we let $p(x)=\frac{k-1}{N^{2}}$, where $k\in\N$ and  $\frac{k-1}{N^{2}}\le f(x)<\frac{k}{N^{2}}$. Given $f\in\mathcal{C}^+$, we extend $p=P(f)$ to $[0,1]$ by interpolating it linearly between the points of $Y_f$. For notational convenience, we denote the extension also by $p$.  See Figure \ref{fig:convex-approximation} for an illustration.

\begin{figure}
    \centering
    \includegraphics[width=0.9\textwidth]{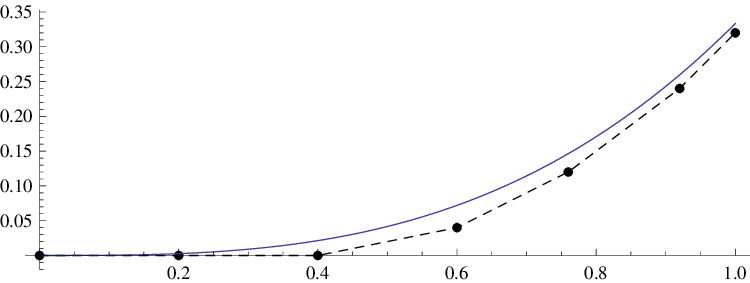}
    \caption{The graphs of $f$ and $p=P(f)$ for $f(x)=x^3/3$, $N=5$. In this case, $Y_f=\{0,\frac{5}{25},\frac{10}{25},\frac{15}{25},\frac{19}{25},\frac{23}{25},1\}$.} \label{fig:convex-approximation}
\end{figure}

  The claims of the following lemma are simple consequences of the definitions.
\begin{lemma} For $f\in\mathcal{C}^+$, $p=P(f)$, and $Y=Y_f$ it holds
\begin{gather}\label{hdef}
|p(x)-f(x)|=O(N^{-2})\text{ for all }x\in Y\,,\\
\label{h'}
p'(t)=O(1)\text{ for all }0< t< 1\,,t\notin Y\,.
\end{gather}
\end{lemma}

\begin{proof}[Proof of Proposition \ref{prop:P}]
We claim that for each $f\in\mathcal{C}^+$, the (extended) projection $p=P(f)$ satisfies
\begin{equation}\label{eq:finalclaim}
|p-f|_\infty=O(N^{-2})\,.
\end{equation}
This implies the claim since if $f,\widetilde{f}\in\mathcal{C}^+$ and $p=P(f)=P(\widetilde{f})$, then
\[
D(f,\widetilde{f})=O(|f-\widetilde{f}|_\infty)=O\left(|f-p|_\infty+|\widetilde{f}-p|_\infty\right)=O(N^{-2})\,.
\]
In short, the estimate \eqref{eq:finalclaim} holds because $Y=Y_f$ has been constructed so that the variance of $f'$ on each interval $[x_{i-1},x_i-O(N^{-2})]$ is at most $O(N^{-1})$ and $p|_{[x_{i-1},x_i]}$ is an affine map with $|p(x_k)-f(x_k)|=O(N^{-2})$ for $k=i-1,i$. For the reader's convenience, we provide a detailed proof.

Let $0\le t\le 1$ and choose $i$ such that $x_{i-1}\le t\le x_{i}$. Set $\widetilde{x}_i=x_i-N^{-2}$.
If $t>\widetilde{x}_{i}$, then $|t-x_i|<N^{-2}$ and it follows using \eqref{hdef}, \eqref{h'},  and $0\le f'(x^+)\le 1$, that
\begin{align*}
|p(t)-f(t)|&\le|p(t)-p(x_{i})|+|p(x_{i})-f(x_{i})|+|f(x_{i})-f(t)|\\
&\le 3\times O(N^{-2})=O(N^{-2}).
\end{align*}

It remains to consider the case $x_{i-1}\le t\le \widetilde{x}_i$. Write
\begin{equation}\label{ht}
\begin{split}
p(t)&=p(x_{i-1})+(t-x_{i-1})\frac{p(x_{i})-p(x_{i-1})}{x_{i}-x_{i-1}}\\
&=f(x_{i-1})+\bigl(p(x_{i-1})-f(x_{i-1})\bigr)+\\
&+(t-x_{i-1})\frac{f(\widetilde{x}_i)-f(x_{i-1})+\bigl(p(x_i)-f(\widetilde{x}_{i})+f(x_{i-1})-p(x_{i-1})\bigr)}{\widetilde{x}_i-x_{i-1}+N^{-2}}\,.
\end{split}
\end{equation}
Using \eqref{hdef}, the definition of $p$, $x_{i-1}\le t\le x_i$ and $0\le f'(x^+)\le 1$, we estimate
\begin{align*}
|p(x_{i-1})-f(x_{i-1})| &= O(N^{-2})\,,\\
|p(x_{i})-f(\widetilde{x}_i)| &= O(N^{-2})\,,\\
\left|\frac{t-x_{i-1}}{x_{i}-x_{i-1}}\right| &\le 1\,.
\end{align*}
We deduce from \eqref{ht} that
\begin{equation} \label{eq:eps1}
\left|p(t) - f(x_{i-1}) - (t-x_{i-1})\frac{f(\widetilde{x}_i)-f(x_{i-1})}{\widetilde{x}_i-x_{i-1}+N^{-2}} \right| = O(N^{-2}).
\end{equation}

Also, using $x_i-x_{i-1}\ge 2 N^{-2}$, $x_i\le t\le x_{i+1}$ and $0\le f'(x^+)\le 1$, we have
\begin{align}
\left|\frac{1}{\widetilde{x}_i-x_{i-1}}-\frac{1}{x_{i}-x_{i-1}}\right| &=O(N^{-2}|x_{i}-x_{i-1}|^{-2})\,,\label{eq:eps2}\\
\left|(t-x_{i-1})(f(\widetilde{x}_i)-f(x_{i-1}))\right| &= O(|x_i-x_{i-1}|^2)\,.\label{eq:eps3}
\end{align}
Combining the estimates \eqref{eq:eps1}--\eqref{eq:eps3}, we conclude that
\[
\left|p(t)-f(x_{i-1})-(t-x_{i-1})\frac{f(\widetilde{x}_{i})-f(x_{i-1})}{\widetilde{x}_i-x_{i-1}}\right|=O(N^{-2})\,.
\]
Since, on the other hand \eqref{1N} and \eqref{ab} yield that
\[\left|f(t)-f(x_{i-1})-(t-x_{i-1})\frac{f(\widetilde{x}_{i})-f(x_{i-1})}{\widetilde{x}_i-x_{i-1}}\right|=O(N^{-2})\,,\]
we have shown that \eqref{eq:finalclaim} holds.
\end{proof}

\begin{rem}
It follows from the previous proposition that if $N(\delta)$ is the minimum number of balls of radius $\delta$ needed to cover $\mathcal{C}^+$ in the Hausdorff metric, then $\log N(\delta) = O(\delta^{-1/2}|\log\delta|)$. This is close to being sharp: $\log N(\delta) =\Omega(\delta^{-1/2})$. Indeed, suppose again $N=\delta^{-1/2}$ is an integer. Set
\[
\Delta = \{  (a_1,\ldots, a_N)\, :\, a_i\in\{0,\tfrac1N,\ldots,1\},\, 0\le a_1\le \cdots \le a_N\le N \}.
\]
Then $\log\#\Delta=\Omega(N)$.
Given $a=(a_1,\ldots,a_N)\in\Delta$, let $L_a$ be the piecewise affine function satisfying $L_a(0)=0$ and $L_a'(t)=a_i/N$ for $(i-1)/N<t<i/N$. It is clear that $L_a\in\mathcal{C}^+$, and if $a\neq b\in\Delta$, then $D(L_a,L_b)= \Omega(1/N^2)$.

In particular, $(\mathcal{C}^+,D)$ has infinite box dimension, and is very far from being a doubling metric space (recall that a metric space is doubling if each ball can be covered by a uniformly bounded number of balls of half the radius).
\end{rem}

\subsection{Completion of the proof of Theorem \ref{thm:convex}} The proof of Theorem \ref{thm:convex} now follows the usual pattern, with minor variations. We still use the construction from Section \ref{sec:notation-and-construction}, but we now assume that $h(t)=t^\beta$ for some $\beta\in (5/3,2)$. As before, we ignore the elements of $\mathcal{C}^+$ whose graphs contain nontrivial dyadic line segments, but for simplicity of notation, we still denote the new slightly smaller collection by $\mathcal{C}^+$.

Lemma \ref{lem:large-deviation} holds for curves $\gamma\in\mathcal{C}^+$, as there is a uniform upper bound for the ratio $\mathcal{H}^1(\gamma\cap Q)/\diam(Q)$ (namely $2$) for all dyadic squares $Q$. The needed analog of Lemmas \ref{lem:many-lines-to-all-lines} and \ref{lem:many-algebraic-to-all-algebraic} is now the following.

\begin{lemma} \label{lem:many-convex-to-all-convex}
Let $1<\eta<4$. For each $n$, there is a  family of curves $\mathcal{C}_{n}\subset\mathcal{C}^+$, such that $\log\#\mathcal{C}_{n} = O(n 2^{\eta n/2})$ and, for any realization of $A$,
\begin{equation*}
\sup_{\gamma\in \mathcal{C}^+}Y^{\gamma}_n\le\sup_{\gamma\in \mathcal{C}_{n}}Y^{\gamma}_n +  O(2^{(3-\beta-\eta) n}).
\end{equation*}
\end{lemma}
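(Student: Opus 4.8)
The plan is to mimic the proofs of Lemmas~\ref{lem:many-lines-to-all-lines} and \ref{lem:many-algebraic-to-all-algebraic}, but with the covering number for $\mathcal{C}^+$ supplied by Proposition~\ref{prop:dim-convex-functions} rather than the polynomial/trivial bounds available there, and with an extra step to handle the points where a curve in $\mathcal{C}^+$ runs nearly parallel to (and close to) a horizontal dyadic line. Concretely, I would set $\delta = 2^{-\eta n/2}$ (so $\eta\in(1,4)$ governs the trade-off between the size of the net and the error term), and let $\Gamma' \subset \mathcal{C}^+$ be the $\delta$-dense family of cardinality $\exp(O(\delta^{-1/2}|\log\delta|)) = \exp(O(n 2^{\eta n/4}))$... wait — one must be slightly careful: Proposition~\ref{prop:dim-convex-functions} gives $\exp(O(\delta^{-1/2}|\log\delta|))$, and with $\delta = 2^{-\eta n/2}$ this is $\exp(O(2^{\eta n/4} n))$, which is comfortably within the claimed bound $\exp(O(n 2^{\eta n/2}))$. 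So the net size is not the binding constraint; the enrichment step below is.

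Next I would enrich $\Gamma'$ exactly as in the proof of Lemma~\ref{lem:many-algebraic-to-all-algebraic}: for each $\gamma\in\Gamma'$, on the sub-interval where its right derivative is smaller than, say, $2^{-n}$ (there the graph is within $O(2^{-n})$ of a horizontal segment over a whole dyadic column), and for each horizontal dyadic line $\{y = k2^{-n}\}$ passing within $\delta$ of that flat part, add the $O(2^{\eta n/2})$ vertical translates of $\gamma$ that pin the curve exactly onto that dyadic line at each of the $O(\delta^{-1}) = O(2^{\eta n/2})$ sample abscissae spaced $\delta$ apart. This multiplies the cardinality by $\exp(O(n 2^{\eta n/2}))$ (the factor $n$ absorbing the $O(n)$ choices of $k$, i.e. the number of dyadic lines meeting $[0,1]$), giving the stated bound $\log\#\mathcal{C}_n = O(n 2^{\eta n/2})$. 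For a given $\gamma\in\mathcal{C}^+$ one then picks $\gamma_0\in\Gamma'$ with $D(\gamma,\gamma_0)\le\delta$ and, if the relevant flat-near-a-dyadic-line situation occurs, replaces $\gamma_0$ by the appropriate translate $\widetilde\gamma$ in $\mathcal{E}_{\gamma_0}$; the goal is the cube-by-cube estimate
\[
\mathcal{H}^1(\gamma\cap Q) \le \mathcal{H}^1(\widetilde\gamma\cap Q) + O(\delta) \quad\text{for all } Q\in\mathcal{D}_n.
\]
Summing over the $O(2^n)$ cubes $Q$ met by $\gamma$, multiplying by $4^n P_n^{-1}$ and using $P_n = \Omega(2^n)$ as in the earlier lemmas, the error becomes $4^n P_n^{-1} \cdot 2^n \cdot O(\delta) = O(2^n \delta) = O(2^{(1-\eta/2)n})$; combined with the extra $P_n^{-1} = \Theta(2^{-\beta n})$ versus the $4^n$ normalization this should be arranged to read $O(2^{(3-\beta-\eta)n})$, matching the statement (the bookkeeping of the exponents is the routine part and I would carry it out carefully at the end).

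The heart of the matter — and the main obstacle — is establishing the cube-by-cube inequality $\mathcal{H}^1(\gamma\cap Q)\le \mathcal{H}^1(\widetilde\gamma\cap Q)+O(\delta)$. For a cube $Q$ that $\gamma$ crosses transversally (derivative bounded away from $0$ and $\infty$, or just: the graph enters and exits through opposite-type sides), Lemma~\ref{lem:convex} applied on the portion of $[0,1]$ over $Q$, together with $D(\gamma,\widetilde\gamma) = O(\delta)$, controls $\mathcal{H}^1(\gamma\cap Q)-\mathcal{H}^1(\widetilde\gamma\cap Q)$ directly. The delicate cases are the cubes where $\gamma$ is nearly flat and nearly coincides with a horizontal dyadic line $y=k2^{-n}$ over a long dyadic run: there a tiny $O(\delta)$ vertical shift of $\gamma$ can change which row of cubes the graph lies in, potentially flipping $\mathcal{H}^1(\gamma\cap Q)$ between $\approx 2^{-n}$ and $0$ over many consecutive $Q$'s — this is precisely why $\mathcal{C}^+$ is not doubling and why the naive argument fails. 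The enrichment step is designed exactly to kill this: by including the translates $f_i$ that sit \emph{on} the dyadic line at the sample points, one can choose $\widetilde\gamma\in\mathcal{E}_{\gamma_0}$ lying (weakly) \emph{below} $\gamma$ near that dyadic line at the relevant abscissae, so that over each flat dyadic run $\gamma$ occupies the same row as $\widetilde\gamma$ or a row no lower, whence $\mathcal{H}^1(\gamma\cap Q)\le\mathcal{H}^1(\widetilde\gamma\cap Q)+O(\delta)$ survives cube by cube. Verifying that the $O(1)$-many crossings of $\gamma$ with each translate (no Bezout here, but convexity forces $\gamma$ and a translate of another convex function to cross at most... actually two convex graphs can cross many times, so one instead uses that $\gamma_0$ is \emph{fixed} and $\gamma$ is $\delta$-close, controlling the sign of $\gamma-\widetilde\gamma$ on each sample interval by monotonicity of derivatives) behave well is the one genuinely fiddly point, and I would do this by a careful case analysis paralleling but extending the one sketched in Lemma~\ref{lem:many-algebraic-to-all-algebraic}.
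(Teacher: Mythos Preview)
Your overall strategy matches the paper's: take a $\delta$-net in $\mathcal{C}^+$ via Proposition~\ref{prop:dim-convex-functions}, enrich by vertical translates pinned on a dyadic horizontal line to handle the nearly-flat part of each curve, then compare lengths cube-by-cube using Lemma~\ref{lem:convex}. The genuine gap is the scale: you must take $\delta=2^{-\eta n}$, not $2^{-\eta n/2}$. With $\delta=2^{-\eta n}$, Proposition~\ref{prop:dim-convex-functions} gives $\log\#\Gamma'=O(\delta^{-1/2}|\log\delta|)=O(n\,2^{\eta n/2})$, and \emph{this} is what saturates the cardinality bound in the lemma; the enrichment only multiplies $\#\Gamma'$ by $O(\delta^{-1})=O(2^{\eta n})$, i.e.\ it adds $O(n)$ to the log-cardinality, not $O(n\,2^{\eta n/2})$ --- you have this backward. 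With your coarser $\delta=2^{-\eta n/2}$ the error term comes out as
\[
4^n P_n^{-1}\cdot O(2^n)\cdot O(\delta)=\Theta\!\bigl(2^{(2-\beta)n}\bigr)\cdot O(2^n)\cdot O(2^{-\eta n/2})=O\!\bigl(2^{(3-\beta-\eta/2)n}\bigr),
\]
which is strictly weaker than the claimed $O(2^{(3-\beta-\eta)n})$ and not recoverable by bookkeeping. It matters downstream: in the proof of Theorem~\ref{thm:convex} one needs $\eta$ with $3-\beta-\eta<0$ and $\eta<2(\beta-1)$, and these force $\beta>5/3$; with error $O(2^{(3-\beta-\eta/2)n})$ one would instead need $\eta>2(3-\beta)$ together with $\eta<2(\beta-1)$, which is impossible for any $\beta<2$.

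A second, smaller issue: the per-cube inequality $\mathcal{H}^1(\gamma\cap Q)\le\mathcal{H}^1(\widetilde\gamma\cap Q)+O(\delta)$ does not hold for every $Q$, even after the enrichment. The translate trick pins $\widetilde\gamma$ to the single dyadic line near the flat part of $f$, but on the steep portion (where $\widetilde f'\ge 2^{-n-1}$) the curve may still pass within $\delta$ of other horizontal dyadic lines, and there Lemma~\ref{lem:convex} does not control which row of $\mathcal{D}_n$ the graphs of $f$ and $\widetilde f$ occupy. The paper handles this not by a crossings argument but by excising a bad set $[a,1]\setminus I$ (the $x$'s where $\widetilde\gamma$ is $\delta$-close to some dyadic line), bounding its total length using the derivative lower bound, and absorbing its contribution into the \emph{summed} error \eqref{Qsum} rather than a cube-by-cube one; your discussion of sign control via monotonicity of derivatives does not supply this mechanism.
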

\begin{proof}
Let $\delta=2^{-\eta n}$. We apply Proposition \ref{prop:dim-convex-functions} to obtain a $\delta$-dense family $\mathcal{C}'_n\subset\mathcal{C}^+$ such that $\log\#\mathcal{C}'_n= O(n 2^{\eta n/2})$.

We will modify $\mathcal{C}'_n$ in the same fashion as in the argument of Lemma \ref{lem:many-algebraic-to-all-algebraic}. If $f\in\mathcal{C}'_n$, let $(0,b]$ be the interval on which $f'(x^+)<2^{-n-1}$. If there is $k\in\N$ such that $|f(t)-k 2^{-n}|<2^{-\eta n+1}$ for some $0\le t\le b$, we choose for each $0\le i 2^{-n\eta}\le b$, $i\in\N\cup\{0\}$, numbers $y_i$ such that the function $f_i(x)=f(x)+y_i$ crosses the horizontal line $y=k 2^{-n}$ at $x_i=i 2^{-n\eta}$.
Let $\mathcal{E}_f$ be the collection of all $f_i$
and set
\[\mathcal{C}_n=\mathcal{C}'_n\cup\bigcup_{f\in\mathcal{C}'_n}\mathcal{E}_f\,.\]

Since each $\mathcal{E}_f$ has at most $O(2^{n\eta})$ elements, it follows that $\log\#\mathcal{C}_n= O(n 2^{\eta n/2})$.
Let $\gamma\in\mathcal{C}^+$. We claim that there is $\widetilde{\gamma}\in\mathcal{C}_n$ such that
\begin{equation}\label{Qsum}
\sum_{Q\in\mathcal{D}_n}\mathcal{H}^1(\gamma\cap Q)\le\mathcal{H}^1(\widetilde{\gamma}\cap Q)+O(2^{n(1-\eta)})\,.
\end{equation}
This implies the claim since then
\begin{align*}
Y_n^\gamma&=4^n P_n^{-1}\mathcal{H}^1(\gamma\cap A_n)\le 4^n P_n^{-1}\left(\mathcal{H}^1(\widetilde{\gamma}\cap A_n)+ O(2^{n(1-\eta)}\right)\\
&=Y_n^{\widetilde{\gamma}}+P_n^{-1}O(2^{n(3-\eta)})=Y_n^{\widetilde{\gamma}}+O(2^{n(3-\beta-\eta)})\,,
\end{align*}
recall that $P_n=\Theta(2^{n\beta})$.

To prove \eqref{Qsum}, fix $f\in\mathcal{C}^+$ and let $\gamma$ be the graph of $f$. We first choose $f^*\in\mathcal{C}'_n$ which is $\delta$-close to $f$. If $\mathcal{E}_{f^*}=\emptyset$, we let $\widetilde{f}=f^*$. Otherwise, there is $k\in\N$ and $0<t<b$ such that $|f^*(t)-k 2^{-n}|<2^{-\eta n+1}$, where $b=\sup\{x\,:\,f^*(x^+)<2^{-n-1}\}$. Denote
\[a=\sup\{x\,:\,f(x)\le k 2^{-n}\}\,,\]
with the convention $a=0$ if $f(0)>k 2^{-n}$.
It follows from the construction of $\mathcal{E}_{f^*}$ that we may choose $f^*_i\in\mathcal{E}_{f^*}$ and $0\le x_i=i 2^{-n\eta}\le b$ with $f^*_i(x_i)=k 2^{-n}$ such that
\begin{equation}\label{cases}
|x_i-a|\le\begin{cases}
O(2^{-n\eta})\text{ if }a\le b\\
O(2^{n(1-\eta)})\text{ if }a>b\,.
\end{cases}
\end{equation}
Let $\widetilde{f}=f^*_i$.

We first assume that $a>b$.
Let
$\gamma^1$, $\widetilde{\gamma}^1, \gamma^2$, and $\widetilde{\gamma}^2$ be the graphs of
$f_{[0,x_i]}$, $\widetilde{f}_{[0,x_i]}$, $f_{[a,1]}$, and $\widetilde{f}_{[a,1]}$, respectively. It then follows that
\begin{equation}\label{gest2}
\mathcal{H}^1(Q\cap\gamma^1)\le \mathcal{H}^1(Q\cap\widetilde{\gamma}^1)+O(2^{-\eta n})\text{ for all }Q\in\mathcal{D}_n.
\end{equation}
Recall that by Lemma \ref{lem:convex}, it is enough to bound  the Hausdorff distance of $\gamma^i\cap Q$ and $\widetilde{\gamma}^i\cap Q$ in order to estimate the difference $\mathcal{H}^1(\gamma^i\cap Q)-\mathcal{H}^1(\widetilde{\gamma}^i\cap Q)$.
Let $I\subset[a,1]$ be the set where the distance of $\widetilde{\gamma}$ is at least $2^{-n\eta}$ to all dyadic lines $y= j 2^{-n}$. Then
\begin{equation}\label{gest3}
\mathcal{H}^1(\gamma_{2}|_I\cap Q)\le\mathcal{H}^1(\widetilde{\gamma}_2\cap Q)+O(2^{-n\eta})\text{ for all }Q\in\mathcal{D}_n.
\end{equation}
Since the derivative of $\widetilde{f}$ is at least $2^{-n-1}$ on $[a,1]$ it follows that $|[a,1]\setminus I|=O(2^{n(1-\eta)})$. Combining with \eqref{gest2}, \eqref{gest3} and \eqref{cases}, and taking into account that each $\gamma$ intersects at most $O(2^n)$ squares in $\mathcal{D}_n$ yields \eqref{Qsum} in the case $a>b$.

If $a\le b$, we can repeat the above argument with $[0,x_i]$ and $[a,1]$ replaced by $[0,b]$ and $[b,1]$.
\end{proof}

\begin{proof}[Proof of Theorem \ref{thm:convex}]
We follow the proof of Theorem \ref{thm:main-technical-result}. Let
\begin{align*}
M_n &= \sup_{\gamma\in\mathcal{C}^+} Y_n^\gamma.
\end{align*}
Pick any $\eta\in (3-\beta,2(\beta-1))$; note the interval in question is nonempty thanks to our assumption that $\beta>5/3$. It will be enough to show that
\begin{equation} \label{eq:borel-cantelli-2}
\sum_{n=1}^\infty \PP\left(M_{n+1}-M_n>n^{-2}\sqrt{M_n}+ 2^{(3-\beta-\eta)n}\right)
<\infty.
\end{equation}
Indeed, thanks to Borel-Cantelli this implies that $\sup_{n\in\mathbb{N},\gamma\in\mathcal{C}^+} Y_n^\gamma<\infty$ almost surely, and from here the proof can be finished exactly as in the proof of Theorem \ref{thm:polynomial}.

From now on, fix $n$ and condition on $A_n$. Pick $\gamma\in\mathcal{C}^+$. Recall that under our assumptions, $P_n=\Theta(2^{n\beta})$. Lemma \ref{lem:large-deviation} (applied to curves in $\mathcal{C}^+$) yields that
\[
\PP\left(Y_{n+1}^\gamma - Y_n^\gamma > n^{-2}\sqrt{Y_n^\gamma}\right)  \le O(1) \exp(-\Omega(n^{-4}2^{(\beta-1)n})).
\]
Let $\mathcal{C}_n$ be the family given by Lemma \ref{lem:many-convex-to-all-convex}, with this $\eta$. Then
\begin{align*}
\PP\left(\max_{\gamma\in \mathcal{C}_n} Y_{n+1}^\gamma - M_n \ge  \frac{\sqrt{M_n}}{n^2} \right) &\le O(1) \exp(O(n 2^{\eta n/2})-\Omega(n^{-4}2^{(\beta-1)n}))\\
&=O(1)\exp(-\Omega(2^{n\eta'}))
\end{align*}
for any $0<\eta'<\beta-1$ (here we use that $\eta<2(\beta-1)$).
This implies \eqref{eq:borel-cantelli-2} and finishes the proof.
\end{proof}

\section{Generalizations}

We finish the paper by sketching some generalizations of the results in Section \ref{sec:introduction}.

In $\mathbb{R}^d$, $d>2$, Theorem \ref{thm:non-tube-null} can be generalized by considering tubes around planes rather than lines. For $k\in\{1,\ldots,d-1\}$, denote the Grassmannian of $k$-planes in $\mathbb{R}^d$ by $G(d,k)$. A {\em $G(d,k)$-tube} $T$ of width $w=w(T)$ is, as usual, a $w$-neighbourhood of a plane $V\in G(d,k)$.  We say that $A\subset\mathbb{R}^d$ is $G(d,k)$-tube null if for every $\delta>0$ one can find countably many $G(d,k)$-tubes $T_i$ covering $A$ with $\sum_i w(T_i)^{d-k}<\delta$. The proof of Theorem \ref{thm:gauge-main-result} extends to this setting to give:

\begin{theorem}\label{thm:plane-tube-null}
Let $h:(0,\infty)\to (0,\infty)$ be continuous and non-decreasing such that $h(2t)\le 2^d h(t)$, and
\[
\int_0^1 t^{-1}\sqrt{t^{k-d}|\log(t)| h(t)}\,dt<+\infty.
\]
Then almost surely the set $A$ constructed in Section \ref{sec:notation-and-construction} has no $G(d,k)$-tube null subsets of positive $\mathcal{H}^h$-measure.

In particular, there exist non $G(d,k)$-tube null sets of Hausdorff and box counting dimension $d-k$.
\end{theorem}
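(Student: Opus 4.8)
The plan is to mimic the proof of Theorem \ref{thm:gauge-main-result} verbatim, replacing lines by $k$-planes throughout. The random set $A$ and the measures $\mu_n,\mu$ are unchanged. For $V\in G(d,k)$ and $n\in\N$ one defines the analogue of \eqref{eq:an},
\[
Z_n^V=\frac{\mathcal{H}^k(A_n\cap V)}{|A_n|}=2^{dn}P_n^{-1}\mathcal{H}^k(A_n\cap V),
\]
restricting as before to planes that are not contained in any dyadic hyperplane, and the goal is to prove that almost surely $\sup_{n,V}Z_n^V<\infty$. Granting this, the proof of Theorem \ref{thm:gauge-main-result} carries over: a uniform bound on $Z_n^V$ says that all orthogonal projections of $\mu_n$ (and hence of $\mu$) onto $(d-k)$-dimensional subspaces have densities bounded by a fixed constant $C$ with respect to $(d-k)$-dimensional Lebesgue measure, whence for any covering of a positive-measure set $B\subset A$ by $G(d,k)$-tubes $\{T_j\}$ one gets $0<\mu(B)\le\sum_j\mu(T_j)\le C\sum_j w(T_j)^{d-k}$, so $B$ is not $G(d,k)$-tube null. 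Taking $h(t)=t^{d-k}|\log t\log|\log t||^{-3}$, which satisfies the stated integrability condition, yields a set of Hausdorff and box dimension $d-k$ that is not $G(d,k)$-tube null.

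The martingale bound, $\sup_{n,V}Z_n^V<\infty$, in turn follows from analogues of Lemma \ref{lem:large-deviation} and Lemma \ref{lem:many-lines-to-all-lines}. For the large deviation estimate, fix $V$, condition on $A_n$ with $a_n=1$, and for each cube $Q\in\mathcal{D}_n$ meeting $V$ in positive $\mathcal{H}^k$-measure set $X_Q=2^{d-k}\nu_{n+1}(Q)-\nu_n(Q)$ where $\nu_j=2^{(d-k)n}\,\mathcal{H}^k|_{V\cap A_j}$ (the normalisation is chosen so $\EE X_Q=0$). Crucially $|X_Q|=O_{d,k}(1)$ because $\mathcal{H}^k(V\cap Q)=O_{d,k}(2^{-kn})$ for every dyadic cube $Q$; grouping the $Q$'s by dyadic size of $\nu_n(Q)$ exactly as in Lemma \ref{lem:large-deviation} and applying Hoeffding gives
\[
\PP\!\left(\left|Z_{n+1}^V-Z_n^V\right|>\kappa\sqrt{Z_n^V}\right)\le O(1)\exp\!\left(-\Omega(1)\kappa^2 2^{(k-d)n}P_n\right)
\]
under the hypothesis $\kappa^2 2^{(k-d)n}P_n=\Omega(1)$. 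For the discretisation lemma one needs a deterministic family $\mathcal{V}_n\subset G(d,k)$ of size $O(1)^n$ such that every (non-dyadic) $k$-plane $V$ has some $V'\in\mathcal{V}_n$ with $\mathcal{H}^k(V\cap Q)\le\mathcal{H}^k(V'\cap Q)+O(8^{-n})$ for all $Q\in\mathcal{D}_n$; since each $k$-plane meets $O(2^{(d-k)n})$ cubes in $\mathcal{D}_n$ this yields $Z_n^V\le Z_n^{V'}+O(2^{-n})$ after using $P_n=\Omega(2^{(d-k)n})$. Combining these two lemmas with Borel--Cantelli through the inequality $\sum_n\sqrt{n\,2^{(d-k)n}h(2^{-n})}<\infty$ (which is what the stated integral condition gives) finishes the proof precisely as in Theorem \ref{thm:main-technical-result}.

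The main obstacle is the discretisation lemma in its full generality: when $k<d-1$ one must control planes making a small angle with one or several coordinate hyperplanes, and unlike the codimension-one case a $k$-plane can be nearly parallel to a whole flag of coordinate subspaces of various dimensions. As in the proof of Lemma \ref{lem:many-lines-to-all-lines}, the remedy is to build, for each coordinate subspace $H$ that $V$ intersects at a small angle, an auxiliary family: choose an $O(1)^n$-dense subset $Y_H\subset H$ and, for each $z\in Y_H$, a controlled collection of $k$-planes through $z$ that are almost parallel to $H$; a careful but elementary compactness/volume argument shows $O(1)^n$ planes suffice at each scale and that \eqref{gg'}-type inequalities hold because $V\cap Q$ and $V'\cap Q$ differ by a set of $\mathcal{H}^k$-measure $O(8^{-n})$ in every cube. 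This is exactly the step the authors ``omit the details'' for when $d>2$ in Lemma \ref{lem:many-lines-to-all-lines}, and the bookkeeping is somewhat heavier here, but no genuinely new idea is required. Everything else---the density/projection argument, the Hoeffding estimate, the Borel--Cantelli summation---is formally identical to the line case with the exponent $d-1$ systematically replaced by $d-k$.
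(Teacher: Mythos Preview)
Your proposal is correct and is exactly the extension the paper has in mind; the paper itself gives no proof beyond asserting that the argument for Theorem \ref{thm:gauge-main-result} carries over with $d-1$ replaced by $d-k$ and the specific choice $h(t)=t^{d-k}|\log t\log|\log t||^{-3}$ for the last claim. One small slip to fix: the normalisation should be $\nu_j=2^{kn}\mathcal{H}^k|_{V\cap A_j}$ (so that $\nu_n(Q)=O(1)$ for each cube $Q\in\mathcal{D}_n$), not $2^{(d-k)n}$; with your stated normalisation $X_Q=2^{d-k}\nu_{n+1}(Q)-\nu_n(Q)$ is neither bounded nor mean-zero in general, whereas with the corrected one it is both, and the rest of your outline goes through unchanged.
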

The latter claim is obtained by taking e.g.
\[
h(t)=t^{d-k}|\log t\log|\log t||^{-3}.
\]
Again, it is easy to see that the dimension threshold $d-k$ is sharp: any set $E\subset\mathbb{R}^d$ of dimension strictly less than $d-k$ is necessarily $G(d,k)$-tube null, since any particular orthogonal projection onto a $(d-k)$-dimensional subspace has zero $(d-k)$-dimensional Lebesgue measure.

As our main goal was to prove the existence of sets of small dimension that are not tube-null, we focused on a simple model that achieved this purpose. But it is possible to prove that many other sets arising from random models are not tube null (provided they are of sufficiently large dimension). This is true for a large class of repeated subdivision fractals; the key feature that must be present in the construction is that, conditioning on the $n$-th level, each surviving point has the same probability of surviving to the next level (and the partition elements should be regular enough that the combinatorial Lemma \ref{lem:many-lines-to-all-lines} can be carried through; but this is a mild condition). Thus, for example, classical fractal percolation limit sets with constant probabilities (see e.g. \cite{RamsSimon11}) are almost surely not-tube null when they have dimension strictly larger than $1$.

The main difference between the families $\mathcal{C}$ and $\mathcal{P}_k$ is their size: We have seen that the number of $\delta$-balls needed to cover $\mathcal{C}$ is $\exp(\Omega(\delta^{-1/2}))$ whereas for $\mathcal{P}_k$ only $\exp(O(|\log\delta|^2))$ such balls are needed. Theorem \ref{thm:polynomial} can be generalized to many other curve families satisfying such bounds. For instance, if $\mathcal{F}$ is a collection of curves in $\R^2$ such that for $0<\delta<1$ it can be covered by $\exp(O(|\log \delta|^{O(1)}))$ balls of radius $\delta$ (in the $D$ metric), and if each $\mathcal{F}$ is contained in a union of $O(1)$ curves in $\mathcal{C}$, then the proofs of Theorem \ref{thm:polynomial} and Lemma \ref{lem:many-convex-to-all-convex} can be combined to show the existence of non $\mathcal{F}$-tube null sets of dimension $1$.

Regarding higher dimensions, it seems likely that our methods can be used to prove results for algebraic curves and surfaces in $\R^d$ in the spirit of Theorem \ref{thm:plane-tube-null}.

We finish this discussion with a generalization in a different direction. Our proof of Theorem \ref{eq:growth-h} reveals that all orthogonal projections of the random measure $\mu$ onto lines are absolutely continuous, with a density bounded by some uniform random constant. It is natural to ask if the projections may enjoy any additional regularity. Away from the coordinate projections, one may use the method of Y. Peres and M. Rams in \cite{PeresRams11} to prove that projections have a H\"{o}lder continuous density (Peres and Rams prove this fact for projections of the natural measure on fractal percolation). However, the dyadic nature of the construction makes a discontinuity in the coordinate projections unavoidable. In a forthcoming work \cite{ShmerkinSuomala2012}, we address this issue by studying intersection properties of a different class of random measures, generated by removing a ``random soup'' consisting of countably many shapes generated by a Poisson point process, see e.g. \cite{NacuWerner11} for the description of this model. In particular, we show the existence of measures of dimension $1$ in $\mathbb{R}^2$, all of whose orthogonal projections onto lines are absolutely continuous, with a continuous density.

\bibliographystyle{plain}
\bibliography{tubenull}

\end{document}